\documentclass[conference]{IEEEtran}
\IEEEoverridecommandlockouts
\usepackage{cite}
\usepackage{amsmath, amsfonts, amsthm}
\usepackage{graphicx}
\usepackage{subcaption}
\usepackage{xcolor}
\usepackage{enumerate}
\usepackage{comment}
\usepackage[]{algorithm2e}

\usepackage{geometry}
\newgeometry{top=1in,bottom=0.75in,right=0.75in,left=0.75in}

\theoremstyle{plain}
\newtheorem{proposition}{Proposition}
\newtheorem{lemma}{Lemma}
\theoremstyle{definition}

\newtheorem{assumption}{Assumption}
\newtheorem{claim}{Claim}
\theoremstyle{remark}
\newtheorem{remarkTheo}{Remark}
\newenvironment{remark}[1][]{\begin{remarkTheo}}{\hfill$\diamond$\end{remarkTheo}}



\def \ba{\begin{array}}
\def \ea{\end{array}}
\def \be{\begin{equation}}
\def \ee{\end{equation}}

\def \bb{\mathbb}

\def \mc{\mathcal}

\def \ms{\mathsf}

\def \eig{\mathrm{eig}}

\def \train{\text{train}}
\def \test{\text{test}}

\def \T{\mathtt{T}}

\begin{document}
%
\title{Sensor Fault Detection and Isolation in Autonomous Nonlinear Systems Using Neural Network-Based Observers}

\author{John Cao$^*$ \and M. Umar B. Niazi$^{\dagger}$ \and Matthieu Barreau$^*$ \and Karl Henrik Johansson$^*$
\thanks{$*$~Division of Decision and Control Systems, Digital Futures, KTH Royal Institute of Technology, SE-100 44 Stockholm, Sweden. Emails: \texttt{johncao@kth.se}, \texttt{barreau@kth.se}, \texttt{kallej@kth.se}}
\thanks{$\dagger$~Laboratory for Information \& Decision Systems, Department of Electrical Engineering and Computer Science, Massachusetts Institute of Technology, Cambridge, MA 02139, USA. Email: \texttt{niazi@mit.edu}}
\thanks{This work is supported by the Swedish Research Council and the Knut and Alice Wallenberg Foundation, Sweden. It has also received funding from the European Union's Horizon Research and Innovation Programme under Marie Sk\l{}odowska-Curie grant agreement No. 101062523.}
}

\maketitle

\begin{abstract}
This paper presents a novel observer-based approach to detect and isolate faulty sensors in nonlinear systems. The proposed sensor fault detection and isolation (s-FDI) method applies to a general class of nonlinear systems. Our focus is on s-FDI for two types of faults: complete failure and sensor degradation. The key aspect of this approach lies in the utilization of a neural network-based Kazantzis-Kravaris/Luenberger (KKL) observer. The neural network is trained to learn the dynamics of the observer, enabling accurate output predictions of the system. Sensor faults are detected by comparing the actual output measurements with the predicted values. If the difference surpasses a theoretical threshold, a sensor fault is detected. To identify and isolate which sensor is faulty, we compare the numerical difference of each sensor meassurement with an empirically derived threshold. We derive both theoretical and empirical thresholds for detection and isolation, respectively. Notably, the proposed approach is robust to measurement noise and system uncertainties. Its effectiveness is demonstrated through numerical simulations of sensor faults in a network of Kuramoto oscillators.
\end{abstract}


\section{Introduction}

Sensor fault detection and isolation (s-FDI) plays a pivotal role in ensuring the safe and efficient operation of numerous industrial processes. We address two distinct types of sensor faults: complete failure and sensor degradation. Complete failure occurs when a sensor becomes entirely nonfunctional, often due to mechanical breakdown or a similar issue. In contrast, sensor degradation results in a gradual decline in the sensor's measurement accuracy. When left undetected, these sensor faults can lead to disruptive consequences for the system. Effective s-FDI methods serve as a proactive solution, allowing system operators to detect sensor faults early, localize the faulty sensor, and take corrective action before they escalate into issues that could result in costly damage or downtime. 

Fault detection and isolation methods can generally be categorized into two main groups: hardware redundancy and analytical redundancy, as discussed in  \cite{VENKATASUBRAMANIAN2003293}. Hardware redundancy methods entail using multiple sensors to acquire and cross-reference data from processes. While effective, this approach does come with the drawback of increased costs for acquiring and maintaining additional hardware. Analytical redundancy methods offer an alternative solution. These methods rely on the principle of residual generation, where the residual represents the variance between a predicted system output and the actual measurement. In the absence of faults, this residual remains close to zero. However, as soon as faults occur, it significantly exceeds a predetermined threshold, facilitating the detection of anomalies without the need for additional hardware.

Historically, s-FDI methods based on analytical redundancy have primarily been model-based, relying on explicit mathematical models of the system under consideration, as noted in \cite{comp_methods}. This model-based approach was initially developed for linear systems in the 1970s, exemplified by pioneering work such as \cite{beard1971failure}, which demonstrated the feasibility of designing filters for detecting and localizing faults within observable system dynamics. Subsequent refinements and enhancements, as seen in \cite{jones1973failed}, resulted in the renowned ``Beard-Jones Fault Detection filter." Further extensions and practical applications of these methods can be found in works such as \cite{massoumnia1986a, massoumnia1986b, massoumnia1989failure, white1987detection, min1987detection}.

Parallel to these developments, the framework of observer-based fault detection schemes emerged for linear systems, with an early reference being \cite{clark1975}. Over time, this approach has gained recognition as one of the most successful methods for s-FDI, leading to diverse research directions. For instance, the application of sliding-mode observers for s-FDI by \cite{EDWARDS2000541, YAN20071605,tan2003sliding} allowed explicit reconstruction of the sensor faults by manipulating the output injection error. Nonlinear unknown input observers have also been prevalent for s-FDI \cite{comp_methods}. However, in recent years, interval-based unknown input observers have gained significant prominence. These observers offer advantages because of relaxed assumptions on system inputs and nonlinearities \cite{zhu2022, xu2019conservatism, zhang2017event, su2020fault}.

Another principal approach to analytical redundancy-based s-FDI is by using data-driven methods, which have gained popularity over the past decade, thanks to significant advancements in deep learning. These methods do not require explicit system models, relying instead on sensor data to approximate the underlying dynamics and generate residuals, as highlighted in \cite{iqbal2019}. For instance, \cite{YANG201985} demonstrated s-FDI using long short-term memory neural networks (LSTM), where residuals were formed by comparing network predictions based on past time-series data with actual measurements. Similar neural network-based approaches have been proposed for various domains, including industrial manufacturing processes, power plants, and unmanned aerial vehicles, as indicated in \cite{iqbal2019, farahani2020, GUO2018155}.

Despite the rich diversity of existing s-FDI methods, it's important to acknowledge that, to the best of the authors' knowledge, these techniques are often challenging to implement in practice. Observer-based methods are constrained by their assumptions about specific system structures, while data-driven approaches require substantial sensor data, which can be both difficult and costly to obtain.

In this paper, we tackle these challenges by introducing a novel learning-based approach to s-FDI using neural network-based nonlinear Luenberger observers, specifically Kazantzis-Kravaris/Luenberger (KKL) observers \cite{kazantzis1998, andrieu2006existence}. KKL observers lift the original system to a higher-dimensional state space, where the system behaves linearly and is stable. The nonlinear transformation required by the lift is obtained by solving a certain partial differential equation. The observer's design is based on this transformed system. To estimate the system's state, an inverse transformation is applied, to obtain the estimate in the original state space. A notable advantage of KKL observers is their flexibility; they do not rely on specific triangular structure or normal form of the system but rather depend on relatively mild observability conditions, rendering them applicable to a broad class of nonlinear systems.

Our main contributions in this paper are 
\begin{enumerate}
    \item We develop a novel s-FDI algorithm using a neural network-based KKL observer with no assumptions on the structure of the system.
    \item We derive a theoretical threshold for sensor fault detection based on the residual, and also devise an empirical method to obtain a threshold for sensor fault isolation.
    \item Through simulations, we show that the proposed method is able to effectively detect and isolate sensor faults under a variety of circumstances while remaining robust and functional under the influence of model uncertainties and measurement noise.
\end{enumerate}

The outline of this paper is as follows. Section~\ref{problem_formulation} formulates the s-FDI problem under some mild assumptions. Section~\ref{FDI_learning} presents the proposed s-FDI methhod. Numerical results are provided in Section~\ref{simluation_results} where the method is tested in simulations on a variety of fault cases. Lastly, Section~\ref{conclusion} concludes the paper.

\section{Problem Formulation}\label{problem_formulation}

We consider a nonlinear system 
\begin{subequations}
\label{system}
\begin{align}
    \dot{x}(t) &= f(x(t)) + w(t)
    \label{system_state} \\
    y(t) &= \phi(t) \left[h(x(t)) + v(t) + \zeta(t)\right] 
    \label{system_output}
\end{align}
\end{subequations}
where $x(t)\in\mathcal{X}\subset\mathbb{R}^{n_x}$ is the state, $y(t)\in\mathbb{R}^{n_y}$ is the output, $f:\bb{R}^{n_x}\rightarrow\bb{R}^{n_x}$ and $h:\bb{R}^{n_x}\rightarrow\bb{R}^{n_y}$ are smooth maps, and $w(t),v(t)$ are process and measurement noises, respectively.
The measured output \eqref{system_output} might be affected by sensor faults $\phi(t)$ and $\zeta(t)$, where
\[
\phi(t) = \begin{bmatrix} \phi_{1}(t) & \dots & \phi_{n_y}(t) \end{bmatrix}^\T \in \{0,1\}^{n_{y}}
\]
models the complete failure of sensor~$i$ when $\phi_i(t)=0$, and 
\[
\zeta(t) = \begin{bmatrix} \zeta_{1}(t) & \dots & \zeta_{n_y}(t) \end{bmatrix}^\T \in \mathbb{R}^{n_{y}}
\]
models the degradation of sensor~$i$, affecting its measurement accuracy, when $\zeta(t)\neq 0$.

Note that a system with fault-free sensors at time $t$ will have $\phi(t) = 1_{n_y}$ and $\zeta(t) = 0_{n_y}$. Any other value of these signals represents a type of fault, such as sensor bias or degradation when $\zeta_i(t) \neq 0$, for $t\geq T$, or sensor failure when $\phi_i(t) = 0$, for $t\geq T$, where $i\in\{1,\dots,n_y\}$ and $T\in\bb{R}_{>0}$ is a time at which fault occurs. 

\begin{assumption}
    \label{assume-1}
    We assume the following regarding the process and measurement noise:
    \begin{enumerate}[(i)]
        \item Essential boundedness: $\|w\|_{L^\infty}\leq \Bar{w}$ and $\|v\|_{L^\infty}\leq \Bar{v}$, where $\Bar{w},\Bar{v}>0$ are known and $\|\cdot\|_{L^\infty}$ denotes the essential supremum norm.
        \item Bounded effect: There exists a class $\mc{K}_\infty$ function $\psi:\bb{R}\rightarrow\bb{R}_{\geq 0}$ such that
        \[
        \|x(t;x_0,w)-x(t;x_0,0)\| \leq \psi(\Bar{w})
        \]
        where $x(t;x_0,w)$ denotes the state trajectory of \eqref{system_state} initialized at $x(0)=x_0$ and driven by the noise $w(t)$, and $x(t;x_0,0)$ denotes the state trajectory in the absence of noise.
    \end{enumerate}
\end{assumption}

\begin{remark}
    Assumption~\ref{assume-1}(i) is a standard assumption in the robust state estimation literature. It says that the signals $w(t)$ and $v(t)$ almost always remain bounded, and the instances at which $w(t)$ or $v(t)$ go unbounded are of zero Lebesgue measure. On the other hand, Assumption~\ref{assume-1}(ii) requires that the model $f$ without a process noise does a good job of describing the noisy system \eqref{system_state}. Such guarantees are usually provided in the system identification literature, and the reader is referred to \cite{milanese1996,mania2022} for more details.
\end{remark}

Given a system \eqref{system}, a neural network-based Kazantzis-Kravaris/Luenberger (NN-KKL) observer \cite{niazi2022} is given by
\begin{subequations}
\begin{align}
    &\dot{\hat{z}}(t) = A\hat{z}(t) + By(t) \label{observer_state} \\
    & \hat{x}(t) = \hat{\mathcal{T}}_\eta^*(\hat{z}(t)) \label{observer_estimate} \\
    &\hat{y}(t) = h(\hat{x}(t)) \label{observer_output}
\end{align}
\label{observer}%
\end{subequations}
which takes output measurements $y(t)$ as input and gives the estimated state $\hat{x}(t)$ and predicted output $\hat{y}(t)$ as outputs. In \eqref{observer_state}, $A \in \mathbb{R}^{n_{z} \times n_{z}}$ is a Hurwitz matrix and $B \in \mathbb{R}^{n_{z} \times n_{y}}$ is such that the pair $(A,B)$ is controllable. 
The observer state $\hat{z}(t)\in\mathbb{R}^{n_z}$ follows a nonlinear transformation $\hat{z} = \hat{\mathcal{T}}_\theta(\hat{x})$, where $\hat{\mathcal{T}}_\theta$ is a neural network approximation of $\mathcal{T} : \mathcal{X} \rightarrow \mathbb{R}^{n_{z}}$, which is an injective transformation of the state space to a higher dimension $n_z=n_y(2n_x+1)$. On the other hand, in \eqref{observer_estimate}, $\hat{\mathcal{T}}^*_\eta$ is a neural network approximation of $\mathcal{T}^* : \mathbb{R}^{n_{z}} \rightarrow \mathcal{X}$, which is the left inverse of $\mathcal{T}$. The maps $\hat{\mc{T}}_\theta$ and $\hat{\mathcal{T}}^*_\eta$ are neural networks with parameters $\theta$ and $\eta$, respectively. Note that the observer is trained by considering fault-free sensors. We refer to \cite{niazi2022} for details on the design of NN-KKL observers.

\begin{assumption}
    \label{assume-2}
    We assume the following regarding nonlinear transformations $\mc{T}, \mc{T}^*$ and their corresponding neural network approximations $\hat{\mc{T}}_\theta, \hat{\mc{T}}_\eta^*$:
    \begin{enumerate}[(i)]
        \item Lipschitzness: $\hat{\mc{T}}_\theta$ and $\hat{\mc{T}}_\eta^*$ are Lipschitz continuous over $\mc{X}$ and $\mc{Z}$, respectively, where $\mc{Z}\supseteq \mc{T}(\mc{X})$. In particular, for every $x,\hat{x}\in\mc{X}$, there exists $\ell_\theta\in\bb{R}_{>0}$ such that
        \[
        \|\hat{\mc{T}}_\theta(x)-\hat{\mc{T}}_\theta(\hat{x})\| \leq \ell_\theta \|x-\hat{x}\|
        \]
        and, for every $z,\hat{z}\in\mc{Z}$, there exists $\ell_\eta\in\bb{R}_{>0}$ such that
        \[
        \|\hat{\mc{T}}_\eta^*(z)-\hat{\mc{T}}_\eta^*(\hat{z})\| \leq \ell_\eta \|z-\hat{z}\|.
        \]
        \item Uniform injectivity: There exists a class $\mc{K}$ functions $\rho,\rho^*$ such that, for every $x,\hat{x}\in\mc{X}$,
        \[
        \|x-\hat{x}\| \leq \rho(\|\mc{T}(x)-\mc{T}(\hat{x})\|)
        \]
        and, for every $z,\hat{z}\in\mc{Z}$,
        \[
        \|z-\hat{z}\| \leq \rho^*(\|\mc{T}^*(z) - \mc{T}^*(\hat{z})\|).
        \]
    \end{enumerate}
\end{assumption}

\begin{remark}
    Assumption~\ref{assume-2}(i) is satisfied if the activation function of neural networks is chosen to be Lipschitz continuous. Among others, one example is a neural network with ReLU activation function, which is Lipschitz with the Lipschitz constant equal to 1. On the other hand, Assumption~\ref{assume-2}(ii) is needed for the existence of a KKL observer \cite{andrieu2006existence}. Notice that uniformly injective $\mc{T}$ implies the uniform injectivity of its inverse $\mc{T}^*$. It is important to remark that Assumption~\ref{assume-2}(i) and (ii) imply boundedness of approximation errors, i.e., 
    \[
        \|\mc{T}(x)-\hat{\mc{T}}_\theta(x)\|<\infty, \; \forall x\in\mc{X}
    \]
    and 
    \[
        \|\mc{T}^*(z)-\hat{\mc{T}}_\eta^*(z)\|<\infty, \; \forall z\in\mc{Z}
    \]
    where $\bb{R}^{n_z}\supset\mc{Z}\supseteq \mc{T}(\mc{X})$.
    This holds because 
    \[
        \|\mc{T}(x)-\hat{\mc{T}}_\theta(x)\|\leq\|\mc{T}(x)\|+\|\hat{\mc{T}}_\theta(x)\|\leq \rho^*(\|x\|) + \ell_\theta\|x\|
    \]
    and $\mc{X}\subset\bb{R}^{n_x}$ is a compact set.   
    Further improvement on this bound can be achieved using the universal approximation property of neural networks given that an ample amount of data is generated and an appropriate network architecture is chosen.
\end{remark}

Observer-based s-FDI of nonlinear systems relies on the design of observers that are accurate in state estimation and output prediction. However, for general nonlinear systems, designing observers is a challenging task. Therefore, we consider an NN-KKL observer \eqref{observer} proposed in \cite{niazi2022}, which is designed using a physics-informed learning approach. The focus of the present paper is on developing an s-FDI method for general nonlinear systems using an NN-KKL observer~\eqref{observer}. Specifically, we address the following problems:
\begin{enumerate}[(P1)]
    \item Detection: How to detect whenever a fault occurs in one or more sensors?
    \item Isolation: How to identify a sensor whenever it becomes faulty?
\end{enumerate}



To solve the problems stated above, we define residuals to compare a measured output from the system with a predicted output from the NN-KKL observer. The residual corresponding to $i$th sensor is defined as
\begin{equation}
    \label{eq:residual-i}
    \arraycolsep=2pt
    \ba{ccl}
    r_i(t) & \doteq & |y_i(t)-\hat{y}_i(t)| \\
    & = & |\phi_i(t)[h_i(x(t))+v_i(t)+\zeta_i(t)] - h_i(\hat{x}(t))|.
    \ea
\end{equation}
Consider equally-distant discrete time samples $t_0,t_1,t_2,\dots$ with $\delta=t_k-t_{k-1}$ for $k\in\bb{N}$, then we define a differentiated residual of $i$th sensor as the absolute value of numerical differentiation of the residual, i.e.,
\begin{equation}
    \label{eq:inc_residual-i}
    \Tilde{r}_i(t_k) \doteq \left| \frac{r_i(t_k)-r_i(t_{k-1})}{\delta}\right|
\end{equation}
Let $\tau_i\in\bb{R}_{>0}$ denote a threshold for residual~$i$ such that, in the steady state, $r_i(t) \leq \tau_i$ when there are no faults. In addition, let $r_\Delta\in\bb{R}_{>0}$ denote a threshold for differentiated residuals such that, in the steady state, $\Tilde{r}_i(t)\leq r_\Delta$ when sensor~$i$ is not faulty.

\section{s-FDI using NN-KKL Observers}\label{FDI_learning}

In this section, we first derive theoretical bounds for the sensor residuals, which allow us to choose their thresholds. Then, we devise an empirical method to compute the threshold for differentiated residuals. Finally, we present the proposed s-FDI method.

\subsection{Upper bounds of the residuals}

We derive two upper bounds on the residuals. The first bound is straightforward but requires an additional stricter assumption that both $\hat{\mc{T}}_\theta$ and $\hat{\mc{T}}_\eta^*$ are contractions, i.e., their Lipschitz constants are less than 1. The second bound relies on Assumption~\ref{assume-1} and \ref{assume-2} and provides a bound that is more general.

\begin{proposition}
    \label{prop:res_contraction}
    Suppose the neural networks $\hat{\mc{T}}_\theta$ and $\hat{\mc{T}}_\eta^*$ are Lipschitz continuous with $\ell_\theta\in[0,1)$ and $\ell_\eta\in[0,1)$. Then, in a fault-less case,
    \be
        \label{eq:res_contraction}
        r_i(t) \leq \ell_{h_i} \frac{\ell_\eta \xi(x) + \xi^*(z)}{1-\ell_\theta\ell_\eta} + \Bar{v}
    \ee
    where $\ell_{h_i}$ is the Lipschitz constant of $h_i(x)$ for $x\in\mc{X}$, $\Bar{v}$ is given in Assumption~\ref{assume-1}(i), and 
    \begin{align}
        \xi(x) & \doteq \mc{T}(x) - \hat{\mc{T}}_\theta(x) \label{eq:xi} \\
        \xi^*(z) & \doteq \mc{T}^*(z) - \hat{\mc{T}}_\eta^*(z) \label{eq:xi*}
    \end{align}
    which are bounded for $x\in\mc{X}$ and $z\in\mc{Z}$ by Assumption~\ref{assume-2}(ii).
\end{proposition}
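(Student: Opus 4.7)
My plan is to turn the residual bound into an estimation-error bound via Lipschitzness of $h_i$, then close a self-referential inequality for $\|x-\hat{x}\|$ involving both approximation errors $\xi$ and $\xi^*$. The contraction hypothesis $\ell_\theta,\ell_\eta<1$ is precisely what makes the self-referential step invertible and produces the prefactor $1/(1-\ell_\theta\ell_\eta)$.

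I would proceed in three short steps. (i) Specialize \eqref{eq:residual-i} to $\phi_i=1$, $\zeta_i=0$ and use Lipschitzness of $h_i$ on the compact set $\mc{X}$ together with $|v_i|\le\bar{v}$ from Assumption~\ref{assume-1}(i) to obtain $r_i(t)\le\ell_{h_i}\,e_x+\bar{v}$, with $e_x\doteq\|x-\hat{x}\|$. (ii) Expand $e_x=\|\mc{T}^*(z)-\hat{\mc{T}}_\eta^*(\hat{z})\|$, insert $\pm\,\hat{\mc{T}}_\eta^*(z)$, and apply Assumption~\ref{assume-2}(i) with \eqref{eq:xi*} to get $e_x\le\|\xi^*(z)\|+\ell_\eta\|z-\hat{z}\|$. (iii) Bound $\|z-\hat{z}\|$ by invoking the encoder-decoder consistency $\hat{z}=\hat{\mc{T}}_\theta(\hat{x})$ of the NN-KKL observer; writing $z=\mc{T}(x)$, inserting $\pm\,\hat{\mc{T}}_\theta(x)$, and using Assumption~\ref{assume-2}(i) with \eqref{eq:xi} yields $\|z-\hat{z}\|\le\|\xi(x)\|+\ell_\theta e_x$. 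Substituting (iii) into (ii) gives $e_x\le\|\xi^*(z)\|+\ell_\eta\|\xi(x)\|+\ell_\theta\ell_\eta\,e_x$; since $\ell_\theta\ell_\eta<1$, solving for $e_x$ and plugging into (i) recovers \eqref{eq:res_contraction}.

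The step I expect to require the most care is (iii): the clean identity $\hat{z}=\hat{\mc{T}}_\theta(\hat{x})$ expresses that $\hat{\mc{T}}_\theta$ is a right inverse of $\hat{\mc{T}}_\eta^*$ on the observer's range, a property intrinsic to how the autoencoder pair is trained rather than something stated in Assumption~\ref{assume-1} or \ref{assume-2}. Once that identity (or a mild relaxation of it, at the cost of an additive training-error term) is accepted, everything else is routine triangle-inequality and Lipschitz bookkeeping, and the contraction hypothesis $\ell_\theta\ell_\eta<1$ is used only at the very end to make the self-referential bound well-posed.
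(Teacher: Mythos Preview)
Your proof is essentially identical to the paper's: the same three-step decomposition with the insertions $\pm\,\hat{\mc{T}}_\eta^*(z)$ and $\pm\,\hat{\mc{T}}_\theta(x)$, followed by the same algebraic closure using $\ell_\theta\ell_\eta<1$. Your caution about step~(iii) is unnecessary here, since the identity $\hat{z}=\hat{\mc{T}}_\theta(\hat{x})$ is explicitly stated as part of the NN-KKL observer description in Section~\ref{problem_formulation}, and the paper's own proof invokes it in exactly the same way.
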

\begin{proof}
    See Appendix~\ref{appendix:proof_prop1}.
\end{proof}

\begin{remark}
    The assumption of Proposition~\ref{prop:res_contraction} that both Lipschitz constants are less than one can be satisfied by regularizing the weights of neural networks during training. For instance, an $l$-layer ReLU network with $W^{(1)},\dots,W^{(l)}$, which are the weight matrices of the neural network layers, has a Lipschitz constant $\ell=\|W^{(1)}\|\dots\|W^{(l)}\|$. By regularizing weights such that the maximum singular value of each weight matrix is less than one, it can be ensured that $\ell<1$. Nevertheless, we acknowledge that the upper bound \eqref{eq:res_contraction} is quite conservative because if $\ell_\theta\ell_\eta$ is close to one, the bound could become very large. In this case, such a bound may not be appropriate for choosing a threshold.
\end{remark}

Recall $z=\mc{T}(x)$ and $\hat{z}=\hat{\mc{T}}_\theta(x)$, where
\[
    \dot{z}(t) = Az(t) + Bh(\Bar{x}(t))
\]
with $\Bar{x}(t)\doteq x(t;x_0,0)$ the noise-free state trajectory, and the dynamics of $\hat{z}(t)$ is given in \eqref{observer_state}.

\begin{lemma} \label{lemma:zeta_bound}
Suppose the matrix $A$ is Hurwitz and diagonalizable with eigenvalue decomposition $A=V\Lambda V^{-1}$. Then, the error $\Tilde{z}(t):=z(t)-\hat{z}(t)$ satisfies
\[
    \ba{l}
    \|\Tilde{z}(t)\| \leq \|\Tilde{z}_0\| \kappa(V) e^{-c t} \\ [1em] 
    \qquad\qquad\displaystyle + \frac{\kappa(V)}{c} \|B\| (1-e^{-ct}) \big[\ell_h \psi(\Bar{w}) + \sqrt{n_y} \Bar{v} \big]
    \ea
\]
where $c$ is given in \eqref{eq:min_evalue}, $\psi$ in Assumption~\ref{assume-1}(ii), and $\ell_h$ is the Lipschitz constant of $h(x)$ for $x\in\mc{X}$.
\end{lemma}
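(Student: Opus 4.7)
The plan is to derive the error dynamics explicitly, apply variation of parameters, and then use the spectral structure of $A$ together with Assumption~\ref{assume-1} and the Lipschitzness of $h$ to estimate each term.

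First, I would subtract \eqref{observer_state} from the $z$-dynamics $\dot{z}(t)=Az(t)+Bh(\Bar{x}(t))$ to obtain
\[
\dot{\Tilde{z}}(t) = A\Tilde{z}(t) + B\bigl[h(\Bar{x}(t)) - y(t)\bigr] = A\Tilde{z}(t) + B\bigl[h(\Bar{x}(t)) - h(x(t)) - v(t)\bigr],
\]
where the second equality uses the fault-free form $y(t)=h(x(t))+v(t)$ (consistent with the setting in which the residual bounds are derived). Variation of parameters then gives
\[
\Tilde{z}(t) = e^{At}\Tilde{z}_0 + \int_0^t e^{A(t-s)} B\bigl[h(\Bar{x}(s)) - h(x(s)) - v(s)\bigr] ds.
\]

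Next, since $A=V\Lambda V^{-1}$ is diagonalizable and Hurwitz, I would write $e^{At}=V e^{\Lambda t} V^{-1}$ and use the submultiplicativity of the induced norm to obtain $\|e^{At}\| \le \kappa(V)\, e^{-ct}$, where $c=-\max_i \mathrm{Re}(\lambda_i(A))>0$ is the quantity referred to in \eqref{eq:min_evalue}. Taking norms inside the integral and applying the triangle inequality then yields
\[
\|\Tilde{z}(t)\| \le \kappa(V)e^{-ct}\|\Tilde{z}_0\| + \kappa(V)\|B\| \int_0^t e^{-c(t-s)}\bigl(\|h(\Bar{x}(s))-h(x(s))\| + \|v(s)\|\bigr)\, ds.
\]

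The two integrand terms are then bounded independently. For the first, Lipschitzness of $h$ on $\mc{X}$ gives $\|h(\Bar{x}(s))-h(x(s))\| \le \ell_h\|\Bar{x}(s)-x(s)\|$, and Assumption~\ref{assume-1}(ii) bounds the right side uniformly by $\ell_h\psi(\Bar{w})$. For the second, Assumption~\ref{assume-1}(i) controls each component of $v$ by $\Bar{v}$ almost everywhere, so $\|v(s)\| \le \sqrt{n_y}\,\Bar{v}$. Substituting these constants and evaluating the elementary integral $\int_0^t e^{-c(t-s)}ds=(1-e^{-ct})/c$ yields the stated inequality.

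I do not expect any deep obstacle: the main care is in tracking the conditioning factor $\kappa(V)$ consistently (it multiplies both the homogeneous term and the forced term because it enters through $\|e^{At}\|$) and in justifying the $\sqrt{n_y}$ factor when passing from the componentwise essential supremum $\Bar{v}$ in Assumption~\ref{assume-1}(i) to the Euclidean norm $\|v(s)\|$ appearing inside the integral.
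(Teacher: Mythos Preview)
Your proposal is correct and follows essentially the same route as the paper: the paper likewise derives $\dot{\Tilde{z}}=A\Tilde{z}+B[h(\Bar{x})-h(x)-v]$, applies the variation-of-constants formula, bounds $\|e^{At}\|\le\kappa(V)e^{-ct}$ via the diagonalization of $A$, and then uses the Lipschitzness of $h$ with Assumption~\ref{assume-1}(ii) and the $\sqrt{n_y}\Bar{v}$ bound from Assumption~\ref{assume-1}(i) before integrating $e^{-c(t-s)}$. The only cosmetic difference is that the paper packages the exponential bounds into a separate lemma and writes the forcing term in convolution form $\sigma(t-\tau)$, but the argument is identical.
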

\begin{proof}
See Appendix~\ref{appendix:proof_lemma2}.
\end{proof}

Using the above lemmas, we can obtain a general upper bound on the residuals under Assumption~\ref{assume-1} and \ref{assume-2}.

\begin{proposition}
    \label{prop:res_general}
    Suppose the matrix $A$ is diagonalizable with eigenvalue decomposition $A=V\Lambda V^{-1}$. Then, in a fault-less case,
    \be
        \label{eq:res_general}
        \ba{l}
        r_i(t) \leq \Bar{v} + \ell_{h_i} \bigg[\xi^*(z) + \ell_\eta \kappa(V) e^{-ct} \|\Tilde{z}_0\| \\ \hspace{1cm} + \frac{\kappa(V)}{c} \|B\|(1-e^{-ct})(\ell_h \psi(\Bar{w}) + \sqrt{n_y}\Bar{v}) \bigg]
        \ea
    \ee
    where $\Bar{v}$ is from Assumption~\ref{assume-1}(i), $\ell_{h_i}$ is the Lipschitz constant of $h_i(x)$ for $x\in\mc{X}$, $\xi^*(z)$ is defined in \eqref{eq:xi*}, $\kappa(V)$ is the condition number of $V$, $c$ is given in \eqref{eq:min_evalue}, $\Tilde{z}_0=\Tilde{z}(0)$, $\ell_h$ is the Lipschitz constant of $h(x)$ for $x\in\mc{X}$, and $\psi(\Bar{w})$ is from Assumption~\ref{assume-1}(ii).
\end{proposition}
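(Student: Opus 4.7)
The plan is to reduce the residual $r_i(t)$ to quantities controlled by the latent-state error $\tilde{z}(t)$ and the approximation error $\xi^*$, and then invoke Lemma~\ref{lemma:zeta_bound}. In the fault-less case, $\phi_i(t)=1$ and $\zeta_i(t)=0$, so the definition \eqref{eq:residual-i} reduces to $r_i(t) = |h_i(x(t)) + v_i(t) - h_i(\hat{x}(t))|$. A triangle inequality, the Lipschitz continuity of $h_i$ on the compact set $\mathcal{X}$ with constant $\ell_{h_i}$, and the essential bound $|v_i(t)|\leq \Bar{v}$ from Assumption~\ref{assume-1}(i) then immediately yield
\begin{equation*}
r_i(t) \leq \ell_{h_i}\|x(t)-\hat{x}(t)\| + \Bar{v},
\end{equation*}
reducing the proposition to bounding the state estimation error.

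For this, I would use the identity $x(t)=\mathcal{T}^*(z(t))$, since $\mathcal{T}^*$ is a left inverse of $\mathcal{T}$, together with $\hat{x}(t)=\hat{\mathcal{T}}_\eta^*(\hat{z}(t))$ from \eqref{observer_estimate}. Inserting the intermediate quantity $\hat{\mathcal{T}}_\eta^*(z(t))$ and applying the triangle inequality gives
\begin{equation*}
\|x(t)-\hat{x}(t)\| \leq \|\mathcal{T}^*(z(t))-\hat{\mathcal{T}}_\eta^*(z(t))\| + \|\hat{\mathcal{T}}_\eta^*(z(t))-\hat{\mathcal{T}}_\eta^*(\hat{z}(t))\|.
\end{equation*}
The first term is exactly $\|\xi^*(z(t))\|$ by \eqref{eq:xi*}, while the second is bounded by $\ell_\eta\|\tilde{z}(t)\|$ by the Lipschitz property of $\hat{\mathcal{T}}_\eta^*$ from Assumption~\ref{assume-2}(i).

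Finally, I would substitute the explicit decay bound on $\|\tilde{z}(t)\|$ from Lemma~\ref{lemma:zeta_bound}, which contributes the $\kappa(V)$, $e^{-ct}$, $\|B\|$, $\ell_h\psi(\Bar{w})$, and $\sqrt{n_y}\Bar{v}$ terms appearing in \eqref{eq:res_general}, then multiply through by $\ell_{h_i}$ and re-introduce the $\Bar{v}$ already isolated in the first step. The main delicate point is the choice of the intermediate in the second step: the auxiliary term must be $\hat{\mathcal{T}}_\eta^*(z(t))$ rather than, say, $\mathcal{T}^*(\hat{z}(t))$, so that the Lipschitz step produces $\ell_\eta\|\tilde{z}\|$, which is an object controlled by Lemma~\ref{lemma:zeta_bound}, while the remaining piece is exactly the approximation error $\xi^*$ from Assumption~\ref{assume-2}. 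Everything else is a routine concatenation of the triangle inequality with already-established facts.
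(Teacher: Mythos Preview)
Your proposal is correct and follows essentially the same route as the paper: the paper also reduces $r_i(t)$ to $\Bar v + \ell_{h_i}\|x-\hat x\|$ via \eqref{eq:prop_proof1}, then bounds $\|x-\hat x\|$ by inserting exactly the intermediate $\hat{\mathcal T}_\eta^*(z)$ to obtain $\xi^*(z)+\ell_\eta\|z-\hat z\|$ as in \eqref{eq:prop_proof2}, and finishes by invoking Lemma~\ref{lemma:zeta_bound}. Your remark about why $\hat{\mathcal T}_\eta^*(z)$ is the correct intermediate is precisely the point.
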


\begin{proof}
    From \eqref{eq:prop_proof1} and $\eqref{eq:prop_proof2}$, we have
    \[
        r_i(t) \leq \Bar{v} + \ell_{h_i} \left[ \xi^*(z) + \ell_\eta \|z-\hat{z}\| \right].
    \]
    Therefore, by applying Lemma~\ref{lemma:zeta_bound}, we obtain \eqref{eq:res_general}.
\end{proof}

\subsection{Fault detection by computing a theoretical threshold for the residuals}

After learning the NN-KKL observer on the training datasets $\mc{X}_\train\subset\mc{X}$ and $\mc{Z}_\train\subset\mc{Z}$, generate a test dataset $\mc{X}_\test\subset\mc{X}$ by simulating a noise-less ($w\equiv 0, v\equiv 0$) and fault-free ($\phi\equiv 1, \zeta\equiv 0$) system \eqref{system_state}. Then, following the methodology presented in \cite{niazi2022}, we can generate $\mc{Z}_\test \approx \mc{T}(\mc{X}_\test)$, where $\mc{Z}_\test\subset\mc{Z}$. The datasets $\mc{X}_\test$ and $\mc{Z}_\test$ are then used to estimate the approximation errors incurred by $\hat{\mc{T}}_\theta$ and  $\hat{\mc{T}}_\eta^*$. Define
\[
    \hat{\epsilon} \doteq \max_{x\in\mc{X}_\test, z\in\mc{Z}_\test} \left\| z - \hat{\mc{T}}_\theta(x) \right\|
\]
and
\[
    \hat{\epsilon}^* \doteq \max_{x\in\mc{X}_\test, z\in\mc{Z}_\test} \left\| x - \hat{\mc{T}}_\eta^*(z) \right\|.
\]
Then, 
by relying on Proposition~\ref{prop:res_general} for threshold computation, 
we obtain
\begin{align*}
    & \limsup_{t\rightarrow\infty} r_i(t)  \\
    & \qquad \leq \Bar{v} + \ell_{h_i} \bigg[ \xi^*(z) + \frac{\kappa(V)}{c}\|B\|(\ell_h\psi(\Bar{w}) + \sqrt{n_y}\Bar{v}) \bigg].
\end{align*}
Therefore, in this case, we can choose the threshold to be
\begin{equation} \label{eq:taui}
     \boxed{\tau_i = \Bar{v} + \ell_{h_i} \left[ \hat{\epsilon}^* + \frac{\kappa(V)}{c}\|B\|(\ell_h\psi(\Bar{w}) + \sqrt{n_y}\Bar{v}) \right].}
\end{equation}



\textit{Fault detection:} At a steady state when $t$ becomes large, it holds that $r_i(t) \leq \tau_i$ when there are no sensor faults.
When the residual $r_i(t)$ surpasses $\tau_i$, it signifies that the measured output is significantly different from the predicted output. Since $\tau_i$ has been computed using tight inequalities, sensor faults can be detected by measuring the residuals $r_i(t)$ and raising an alert whenever $r_i(t) > \tau_i$ for any $i$ and any $t\geq 0$.

\subsection{Fault isolation by computing an empirical threshold for differentiated residuals} \label{subsec_isolation}

As described in the previous subsection, a sensor fault is detected whenever the residual $r_i(t)$ surpasses the threshold $\tau_i$. However, $r_i(t)$ surpassing $\tau_i$ and having the largest value among other residuals do not mean that sensor~$i$ is faulty\footnote{Assumptions about the triangular structure of the system made in \cite{YAN20071605,tan2003sliding,fonod2014,zhu2022,xu2019conservatism,zhang2017event,su2020fault,vemuri1997neural} would actually enable a simpler treatment because of easy decoupling between the outputs. However, such an assumption holds for a restrictive class of systems.}. This is because a fault in sensor~$j$ distorts the output $y_j(t)$, which is filtered through the observer dynamics and then transformed by $(h_j\circ \hat{\mc{T}}_\eta^*)(\cdot)$. Since the neural network $\hat{\mc{T}}_\eta^*$ is not diagonal, the fault in sensor~$j$ could induce a large residual $r_i(t)$ even when sensor~$i$ is not faulty. This phenomenon can also be observed in the third row of Fig.~\ref{fig:single_sensor} in Section~\ref{simluation_results}. Because of the inter-dependencies due to non-diagonal $\hat{\mc{T}}_\eta^*$, the transients after the occurrence of a fault may persist above the threshold, leading to an inability to exactly isolate the sensor faults from the residuals. Therefore, we propose fault isolation by evaluating differentiated residuals $\Tilde{r}_i(t_k)$.

Based on our empirical observations, a spike is generated in the differentiated residual $\Tilde{r}_i(t_k)$ whenever an abrupt fault ($\phi_i(t)=0$ or $\zeta_i(t)\neq 0$) is introduced in sensor~$i$ at time $t_k$. The explanation for this spike is straightforward because in the differentiated residual, the fault in the output also gets numerically differentiated. Therefore, an abrupt fault results in a spike in the differentiated residual. However, when the fault signal $\zeta_i(t)$ is very smooth and of low magnitude, it is difficult to isolate it. Nevertheless, it is reasonable to assume faults to be irregular, non-smooth signals as compared to a smooth, analytical signal.

Consider the test datasets $\mc{X}_\test$ and $\mc{Z}_\test$, where we denote the state trajectories $x^j(t_k)\doteq x(t_k;x_0^j,0)$ that are initialized at $p$ initial points $x(0)=x_0^j$, for $j=1,\dots,p$, and the corresponding observer estimates as $\hat{x}^j(t_k)\doteq \hat{x}(t_k;\hat{\mc{T}}_\eta^*(z_0^j))$. We simulate the NN-KKL observer and compute the residuals $r_1^j(t_k),\dots,r_{n_y}^j(t_k)$, for $k=0,\dots,T$, where $T\in\bb{N}$ and 
\[
r_i^j(t_k) = |y_i(t_k;x^j) - \hat{y}_i(t_k;\hat{x}^j)|.
\]
Here, $t_T>0$ is chosen large enough to allow the observer to converge to a neighborhood of the original state. Then, the corresponding differentiated residuals $\Tilde{r}_1^j(t_k),\dots,\Tilde{r}_{n_y}^j(t_k)$, for $k=1,\dots,T$, are obtained using \eqref{eq:inc_residual-i}. Let $t_c$, for $c<T$, denote the minimum time at which the observer is in the steady state. Then an empirical threshold for differentiated residuals is computed as follows:
\begin{equation}
    \label{eq:empirical_threshold}
    \boxed{r_\Delta = \max_{\substack{k\in\{c,\dots,T\} \\ j\in\{1,\dots,p\} \\ i\in\{1,\dots,n_y\}}} \Tilde{r}_i^j(t_k).}
\end{equation}

\textit{Fault isolation:} Given that the learned NN-KKL observer estimates the system state accurately, it holds that $\Tilde{r}_i(t)\leq r_\Delta$ at a steady state when $t_k$ becomes sufficiently large. Therefore, whenever the differentiated residual $\Tilde{r}_i(t)$ surpasses the empirical threshold $r_\Delta$ given by \eqref{eq:empirical_threshold}, an alert can be raised that sensor~$i$ is probably faulty.

\section{Simulation Results}\label{simluation_results}

Our approach is demonstrated in this section using numerical simulations. We show that s-FDI of a highly nonlinear system can be achieved using NN-KKL observers, even in the presence of additive process and measurement noises. Both types of faults, sensor degradation represented by the fault signal $\zeta_i(t) \neq 0$ and sensor failure represented by $\phi_i(t) = 0$ in the system output \eqref{system_output}, are demonstrated.

\begin{figure}[!ht]
    \centering
    \includegraphics[width = 0.24\textwidth]{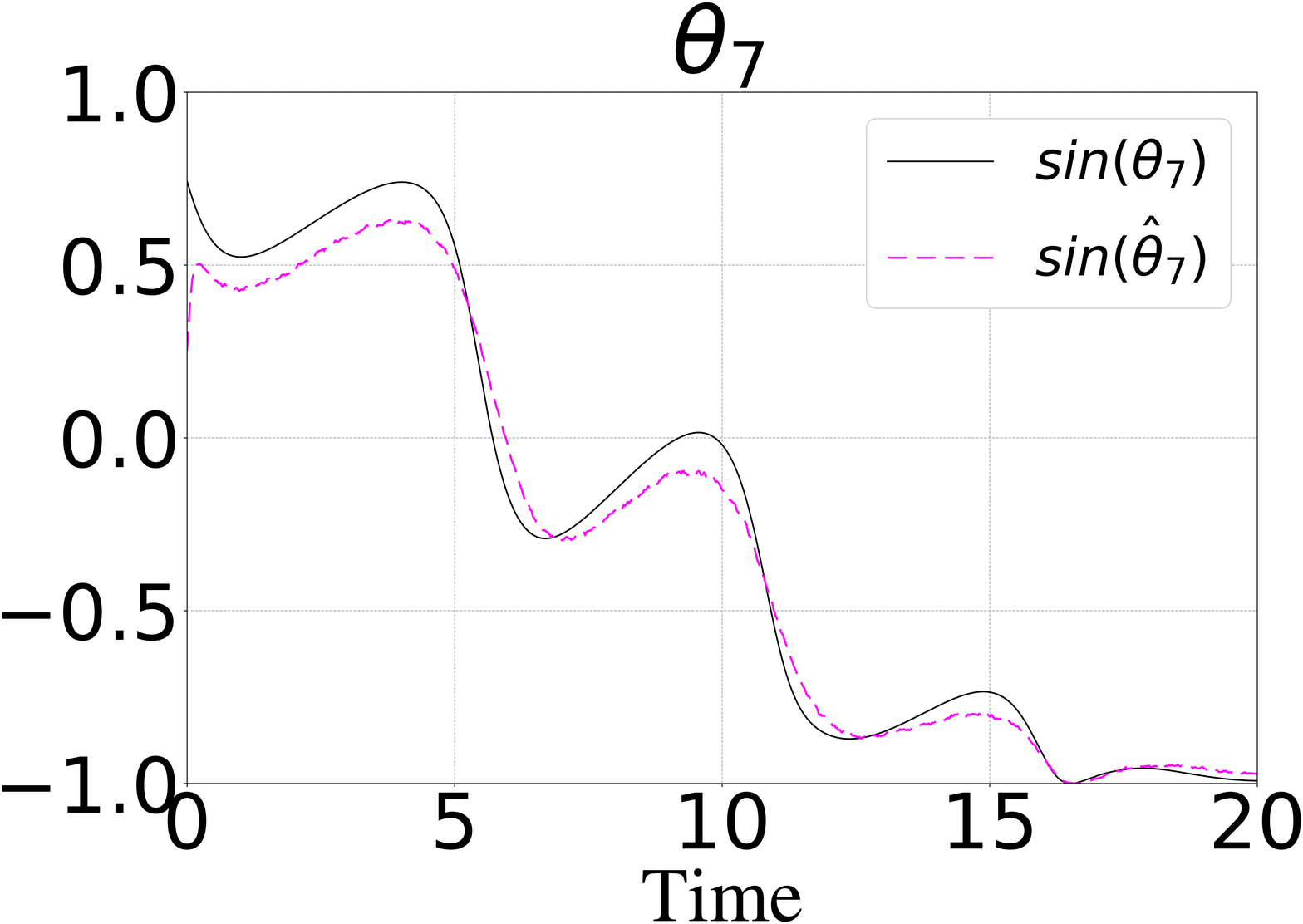}
    \includegraphics[width = 0.24\textwidth]{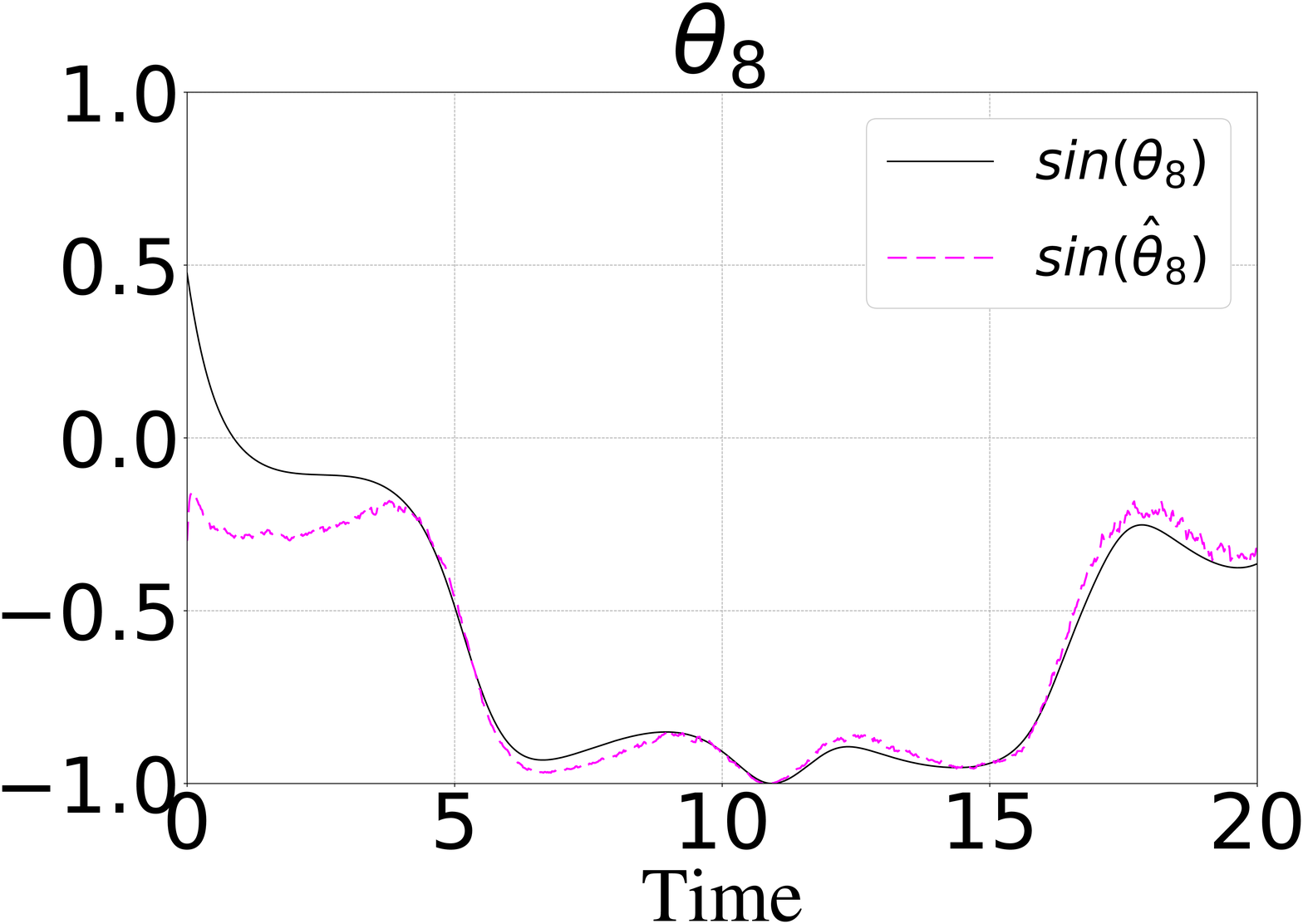}
    
    \caption{Estimated and true trajectories of states $\theta_{7}$ and $\theta_{8}$ under the influence of process and sensor noise.}
    \label{fig:demo}
\end{figure}
\begin{figure*}[!ht]
        \centering
        \begin{subfigure}[t]{0.195\textwidth}
        \includegraphics[width=\textwidth]{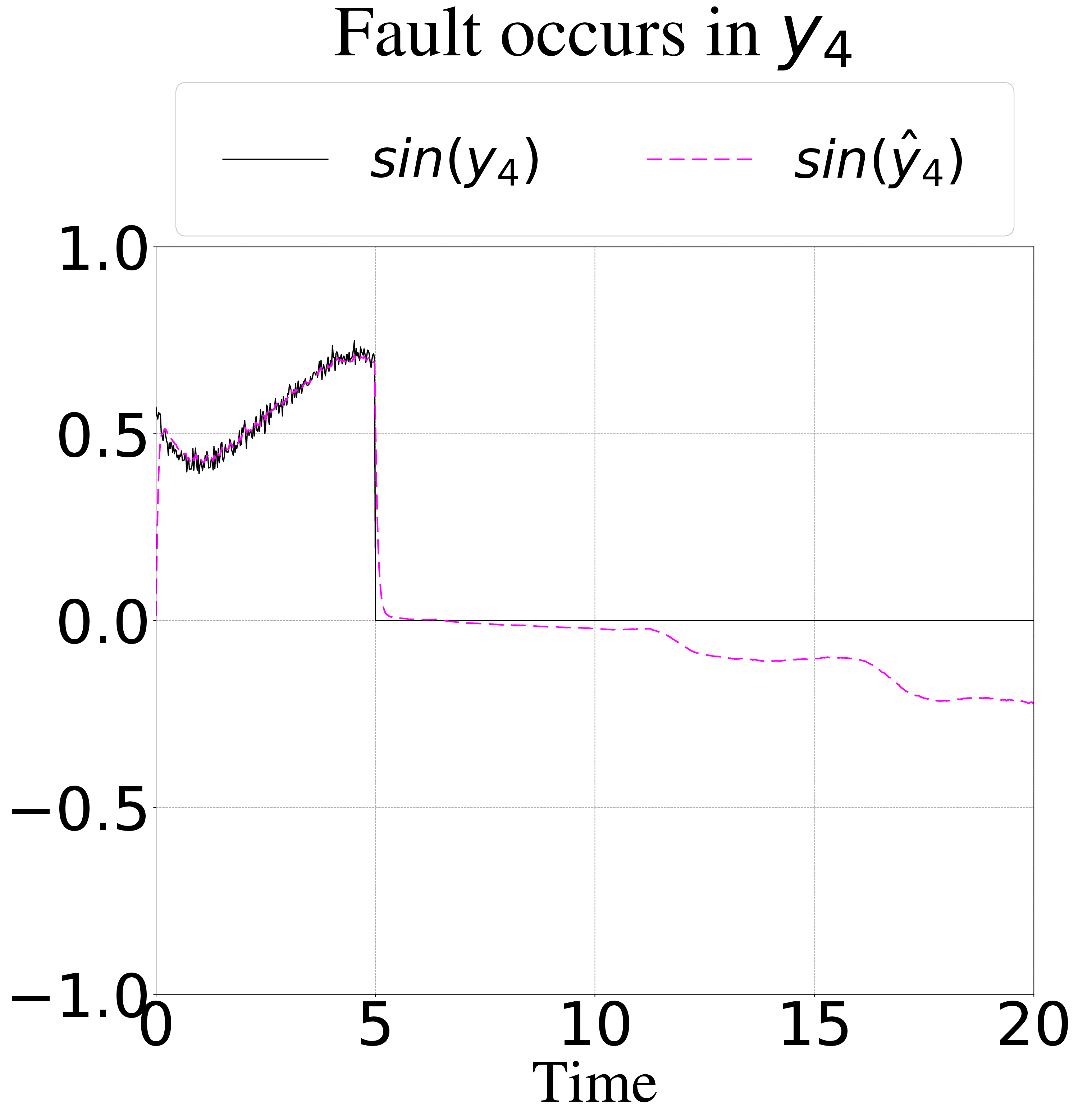}
        \includegraphics[width=\textwidth]{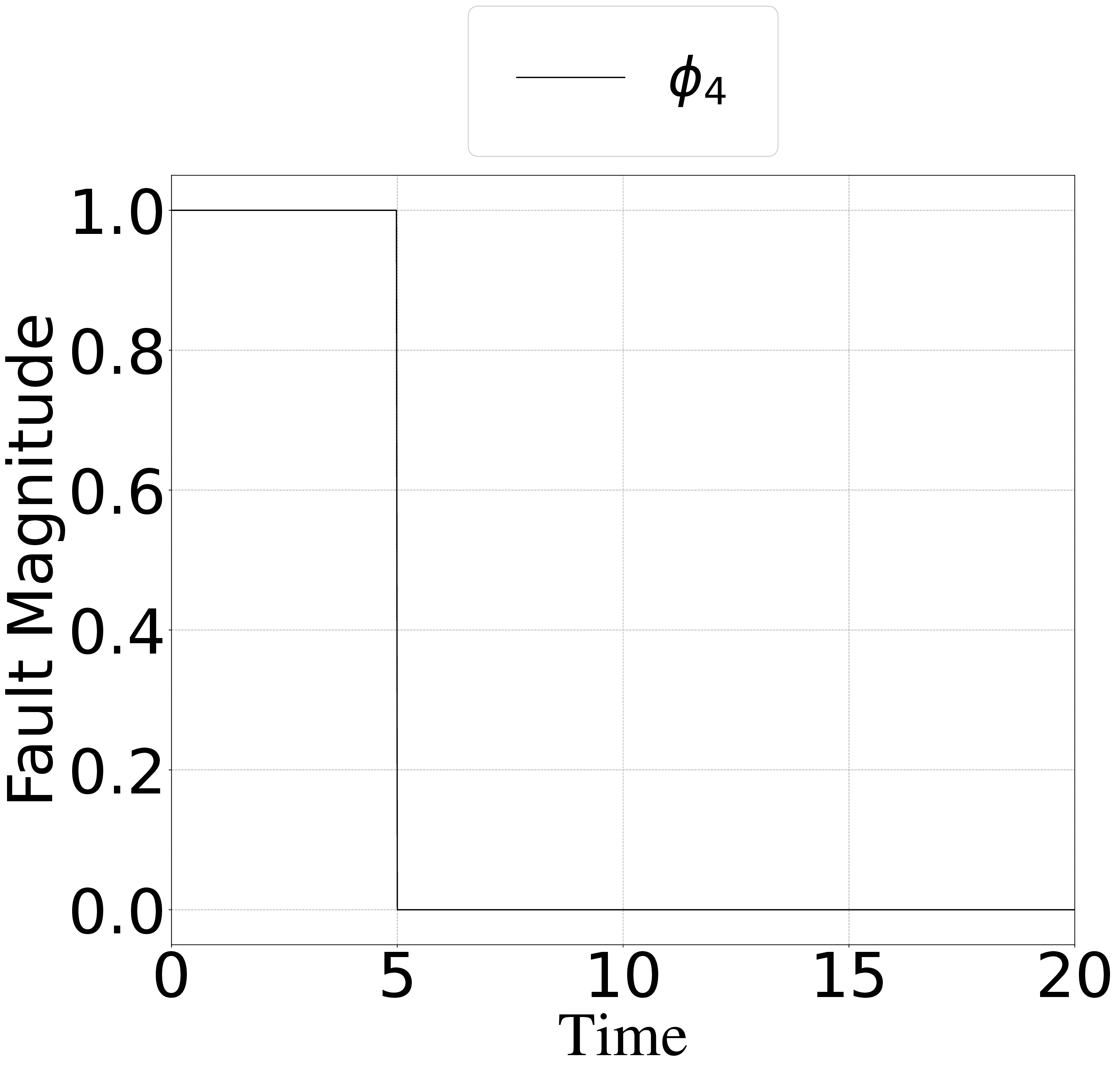}
        \includegraphics[width=\textwidth]{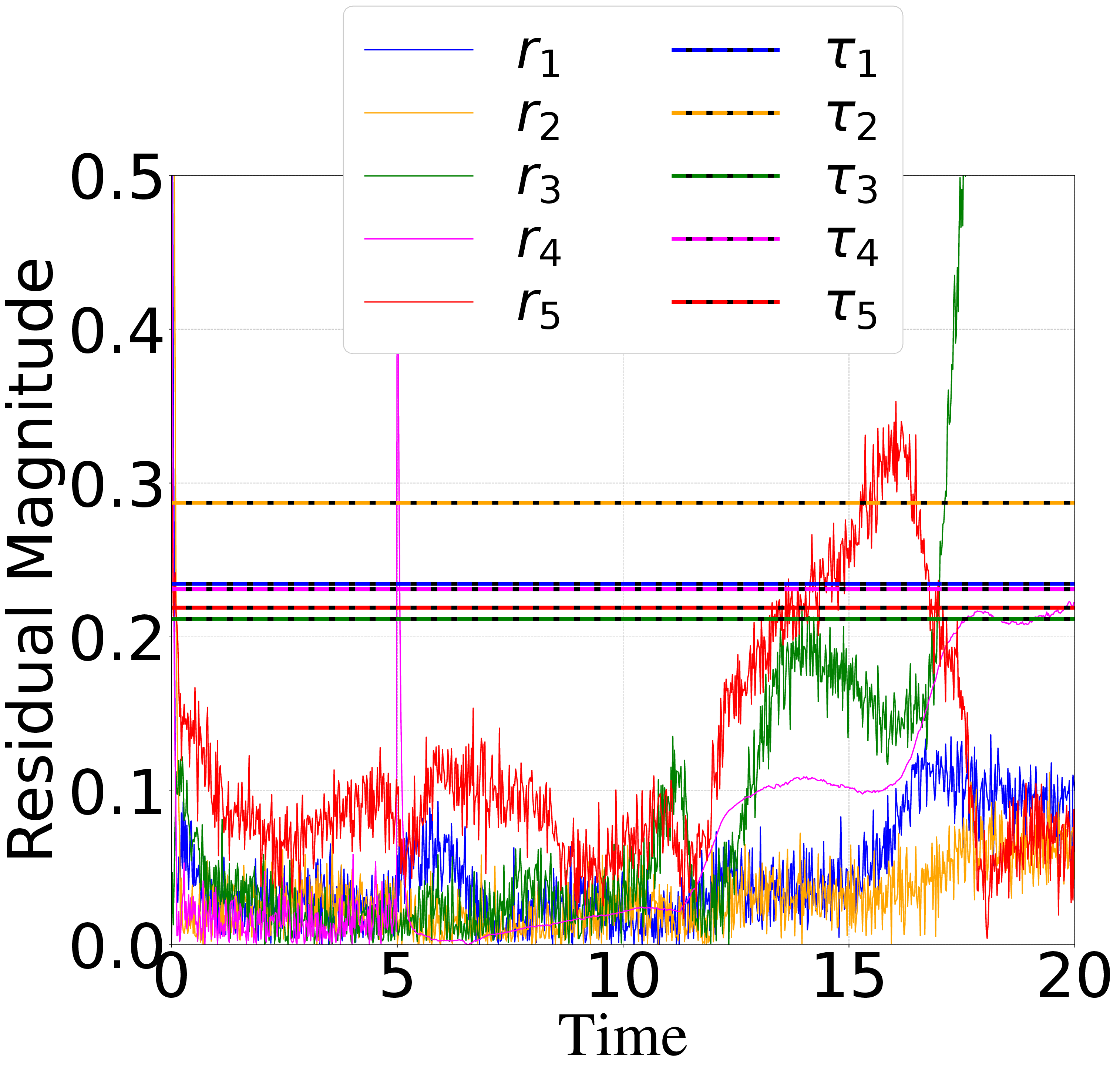}
        \includegraphics[width=\textwidth]{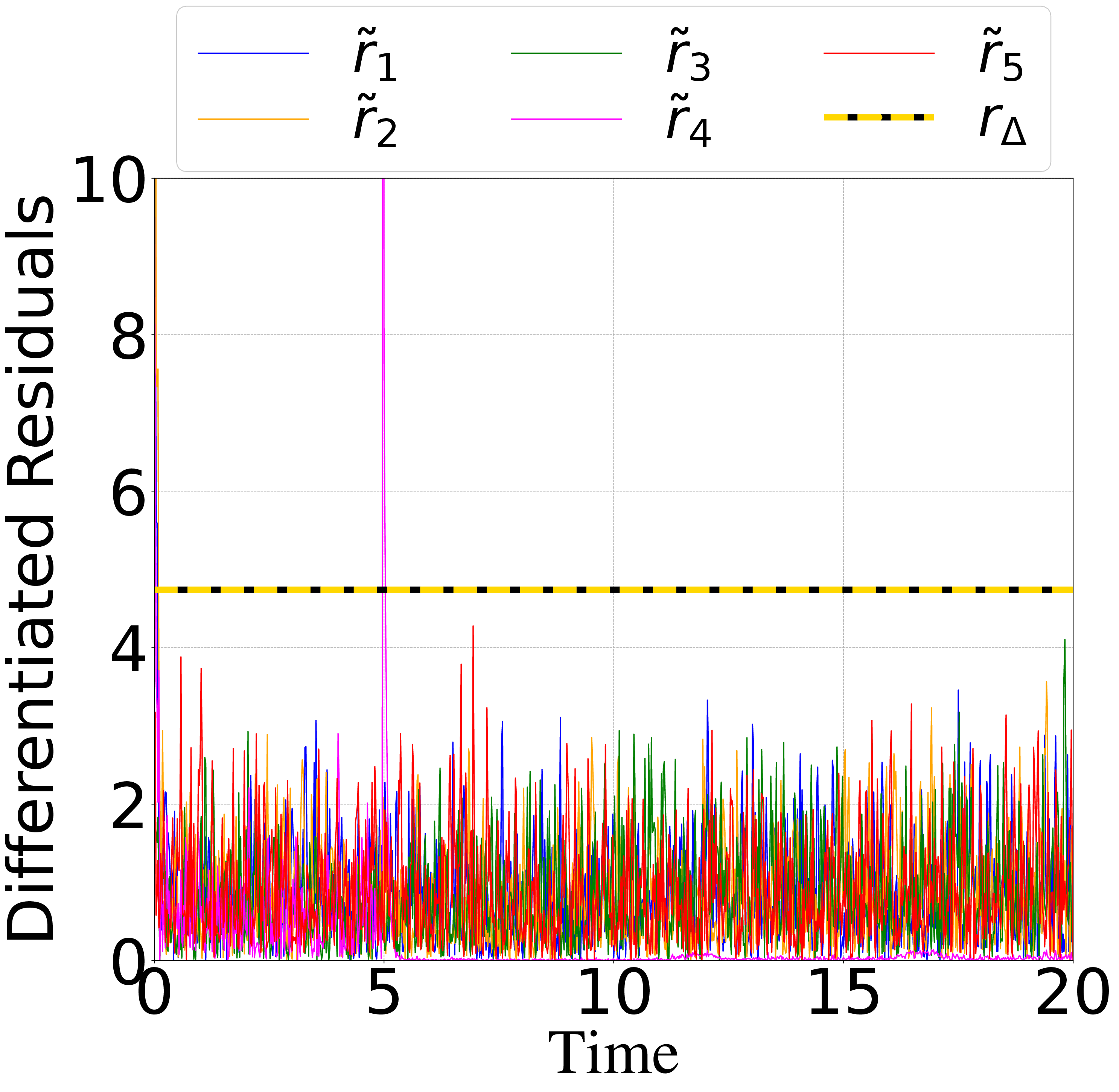}
        \caption{}
        \label{fig:c}
        \end{subfigure}
        \begin{subfigure}[t]{0.195\textwidth}
        \includegraphics[width=\textwidth]{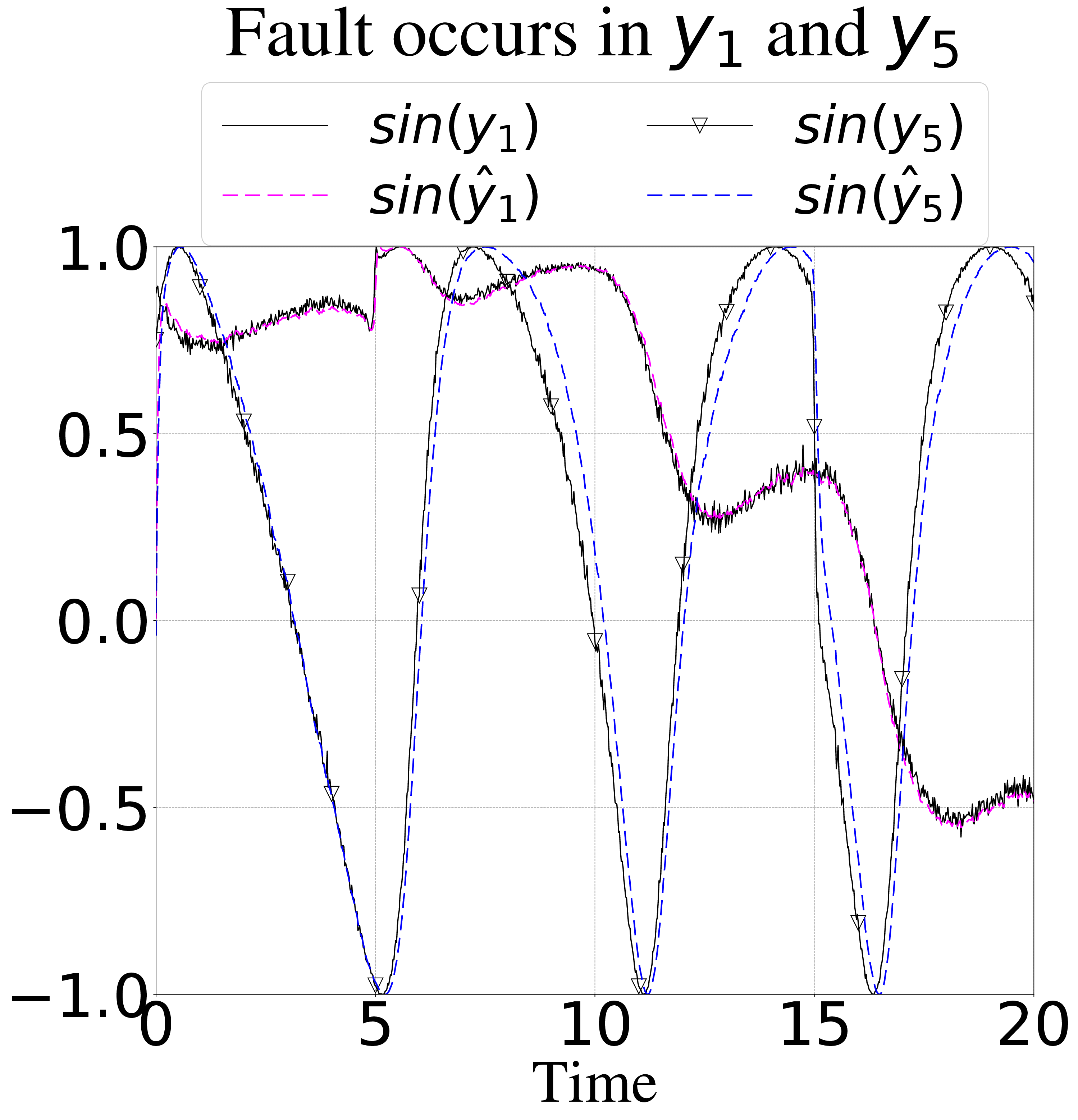}
        \includegraphics[width=\textwidth]{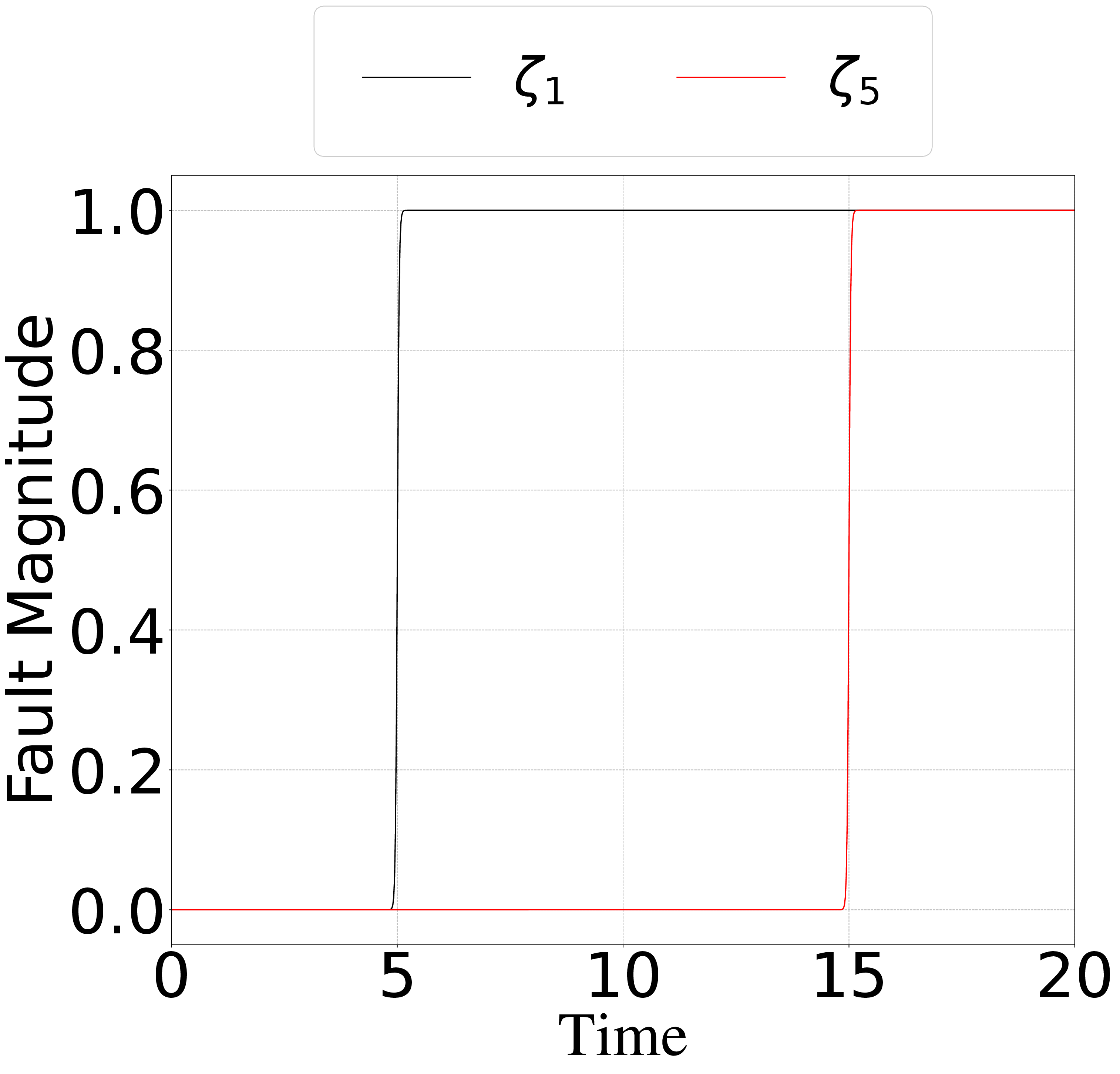}
        \includegraphics[width=\textwidth]{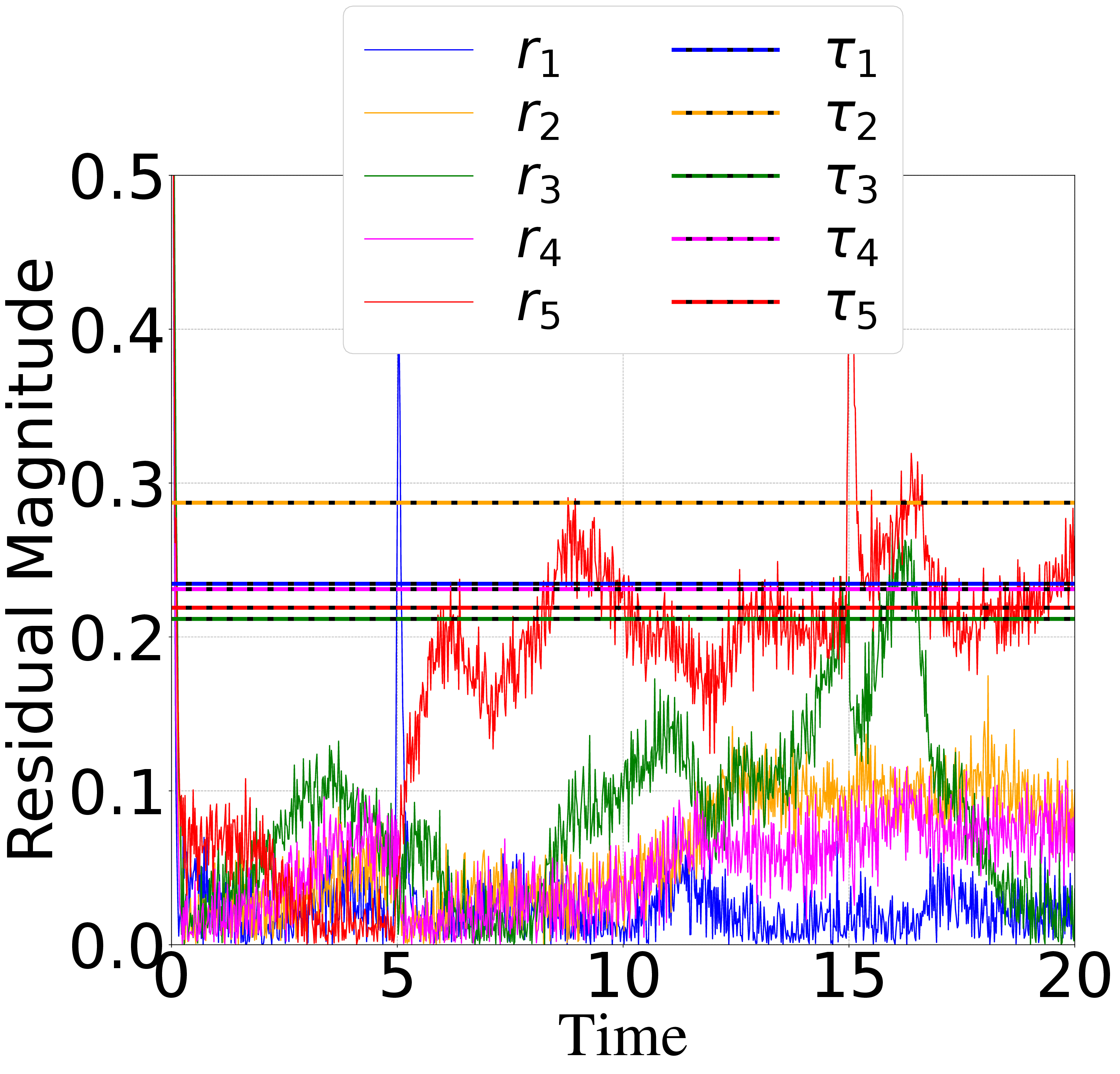}
        \includegraphics[width=\textwidth]{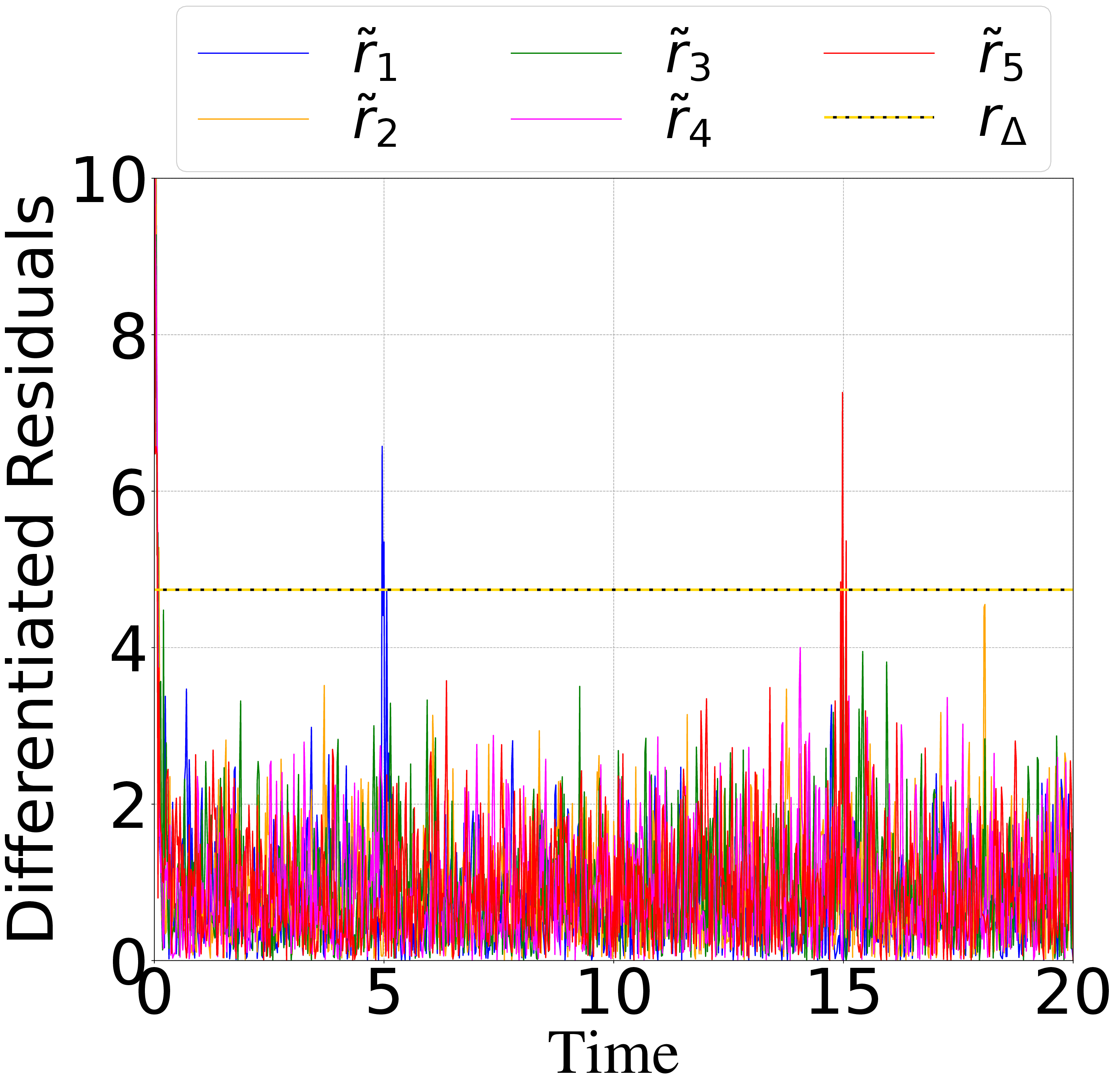}
        \caption{}
        \label{fig:b}
        \end{subfigure}
        \begin{subfigure}[t]{0.195\textwidth}
        \includegraphics[width=\textwidth]{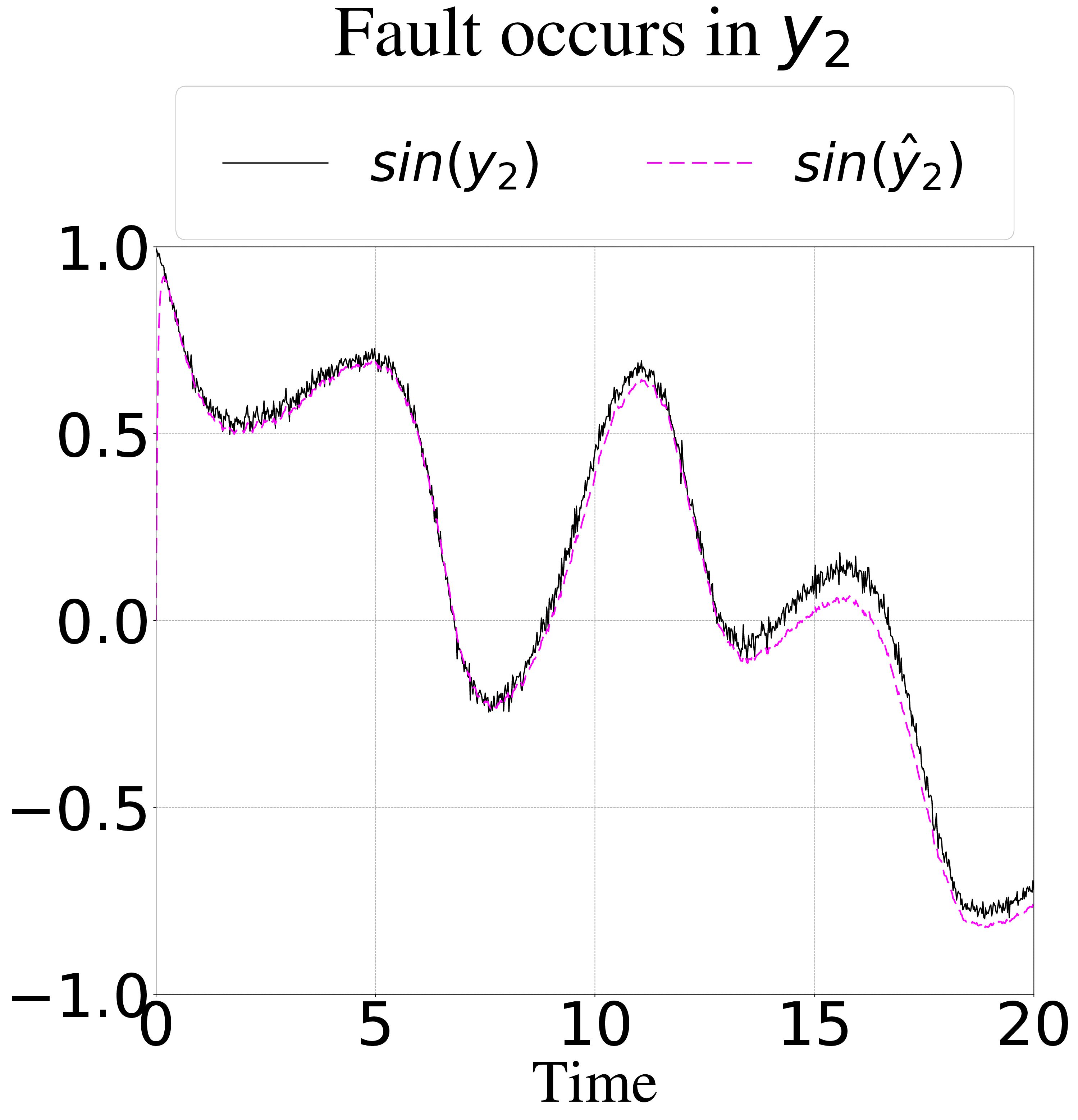}
        \includegraphics[width=\textwidth]{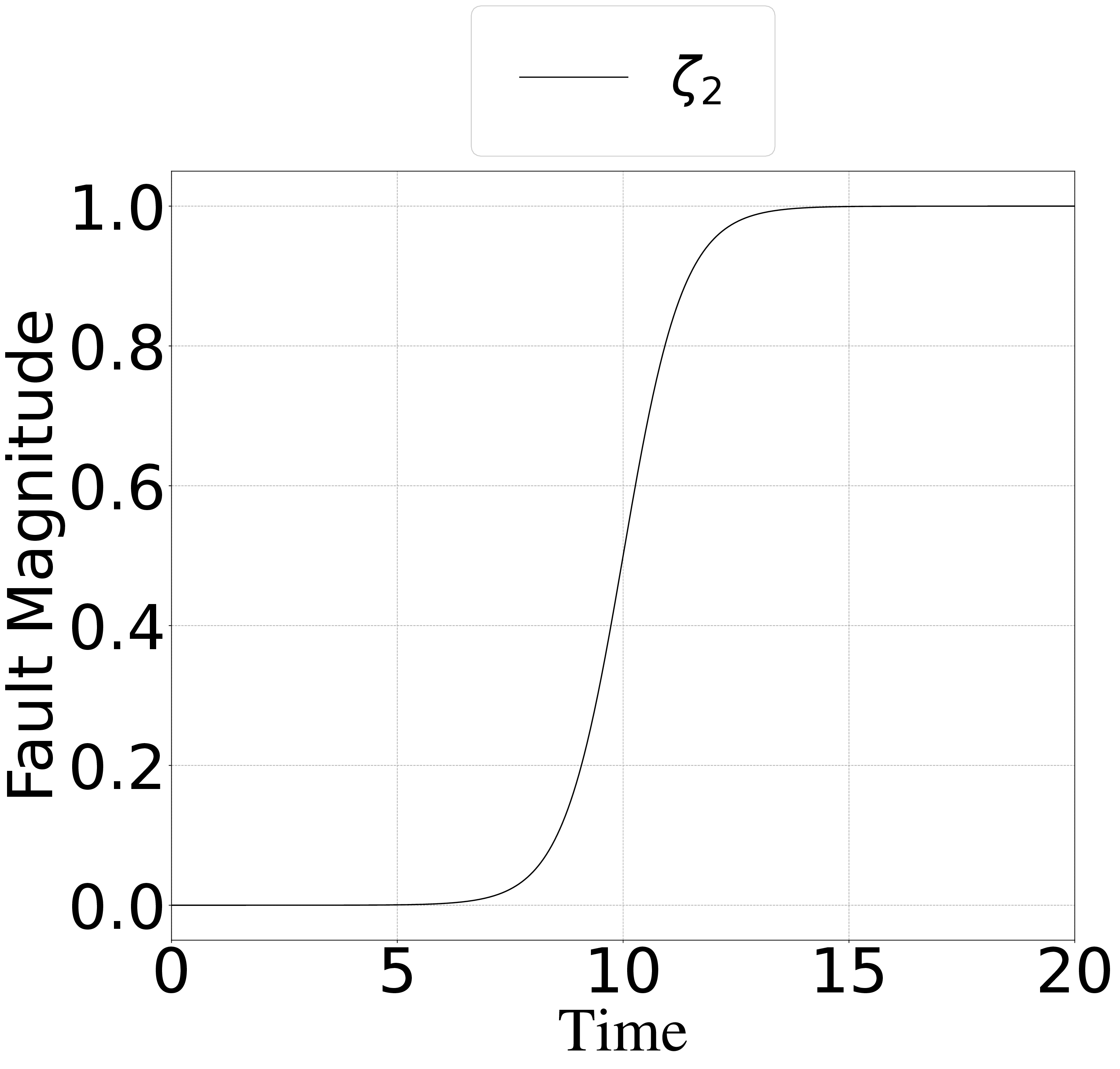}
        \includegraphics[width=\textwidth]{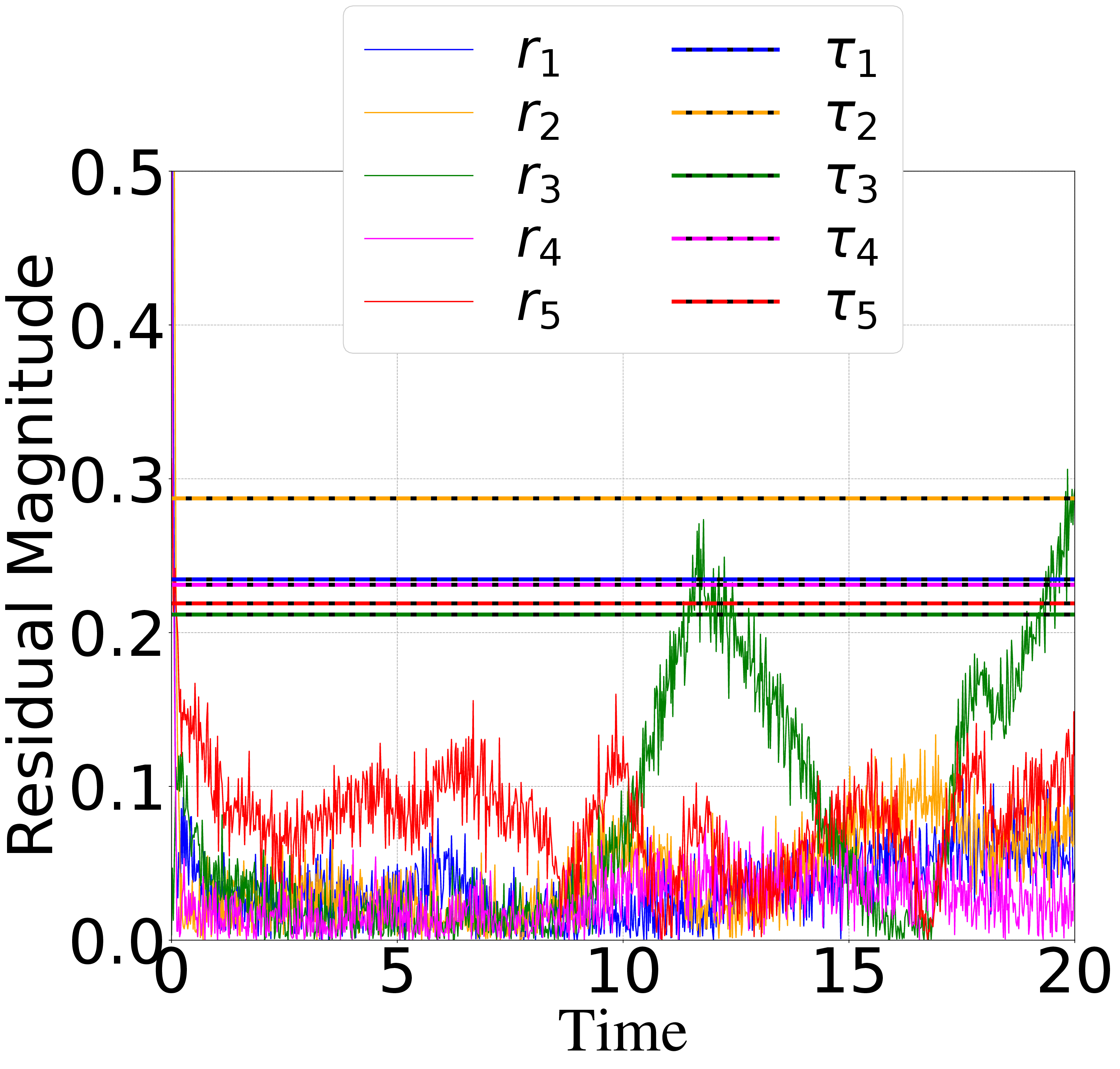}
        \includegraphics[width=\textwidth]{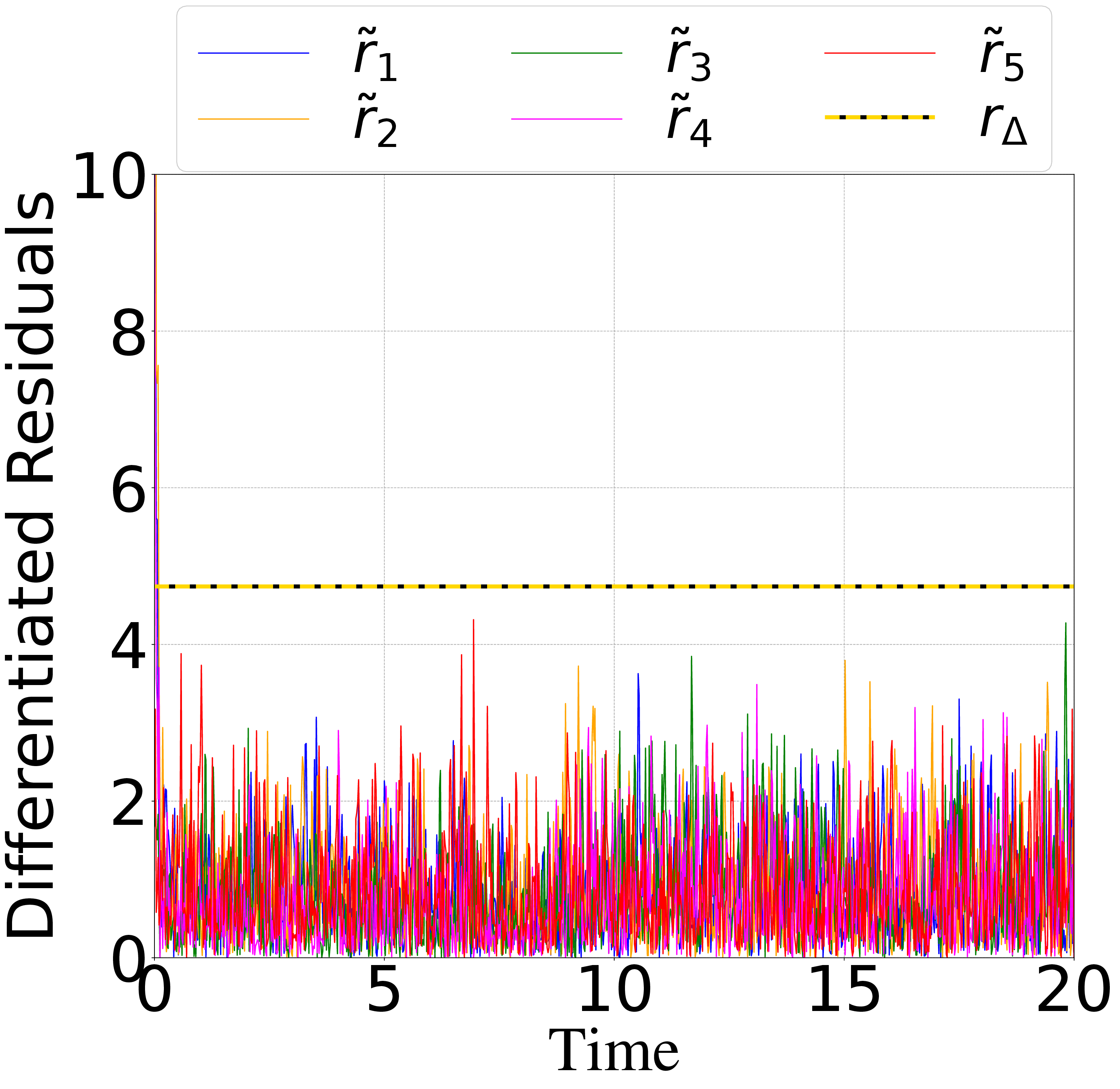}
        \caption{}
        \label{fig:a}
        \end{subfigure}
        \begin{subfigure}[t]{0.195\textwidth}
        \includegraphics[width=\textwidth]{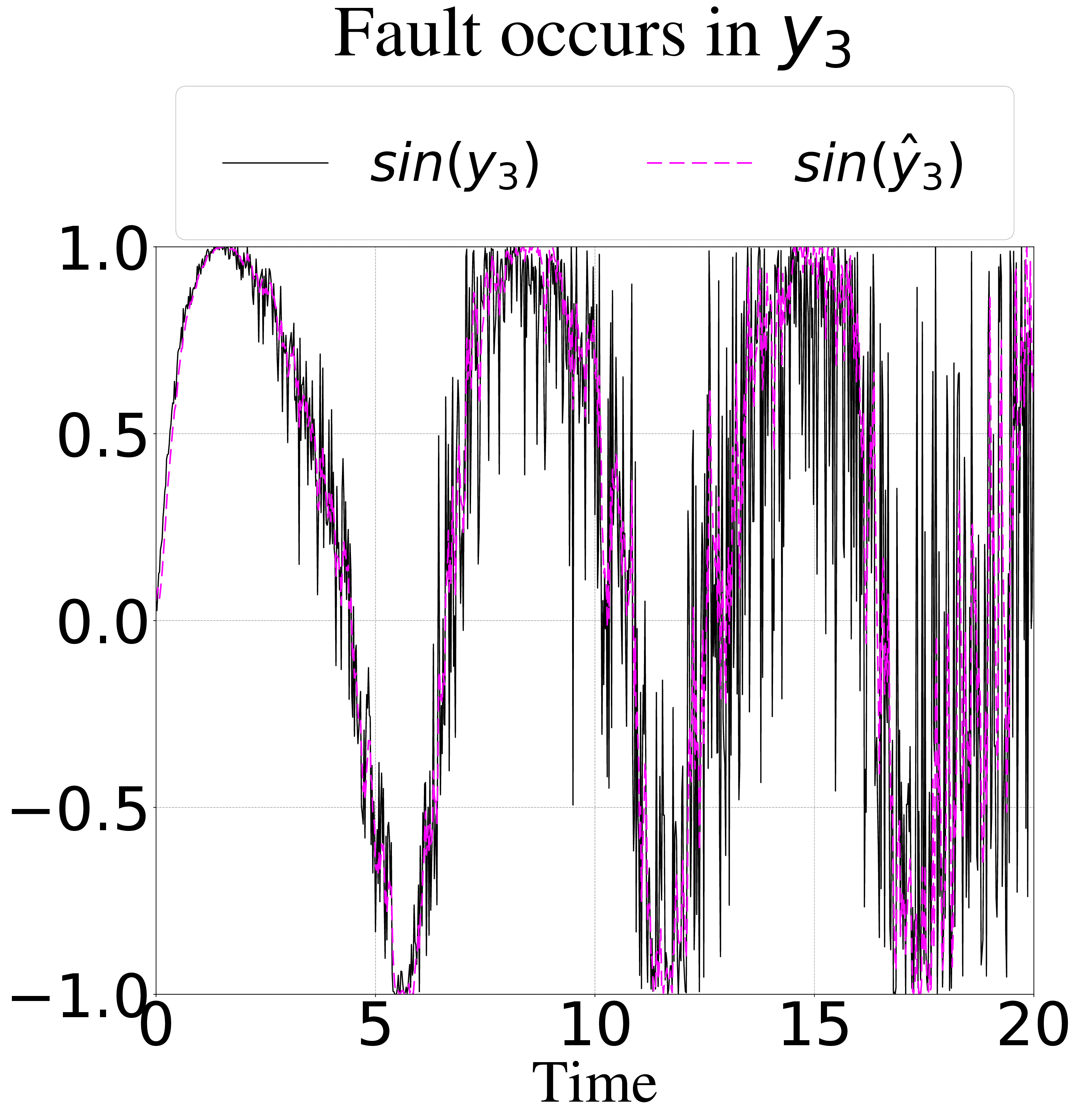}
        \includegraphics[width=\textwidth]{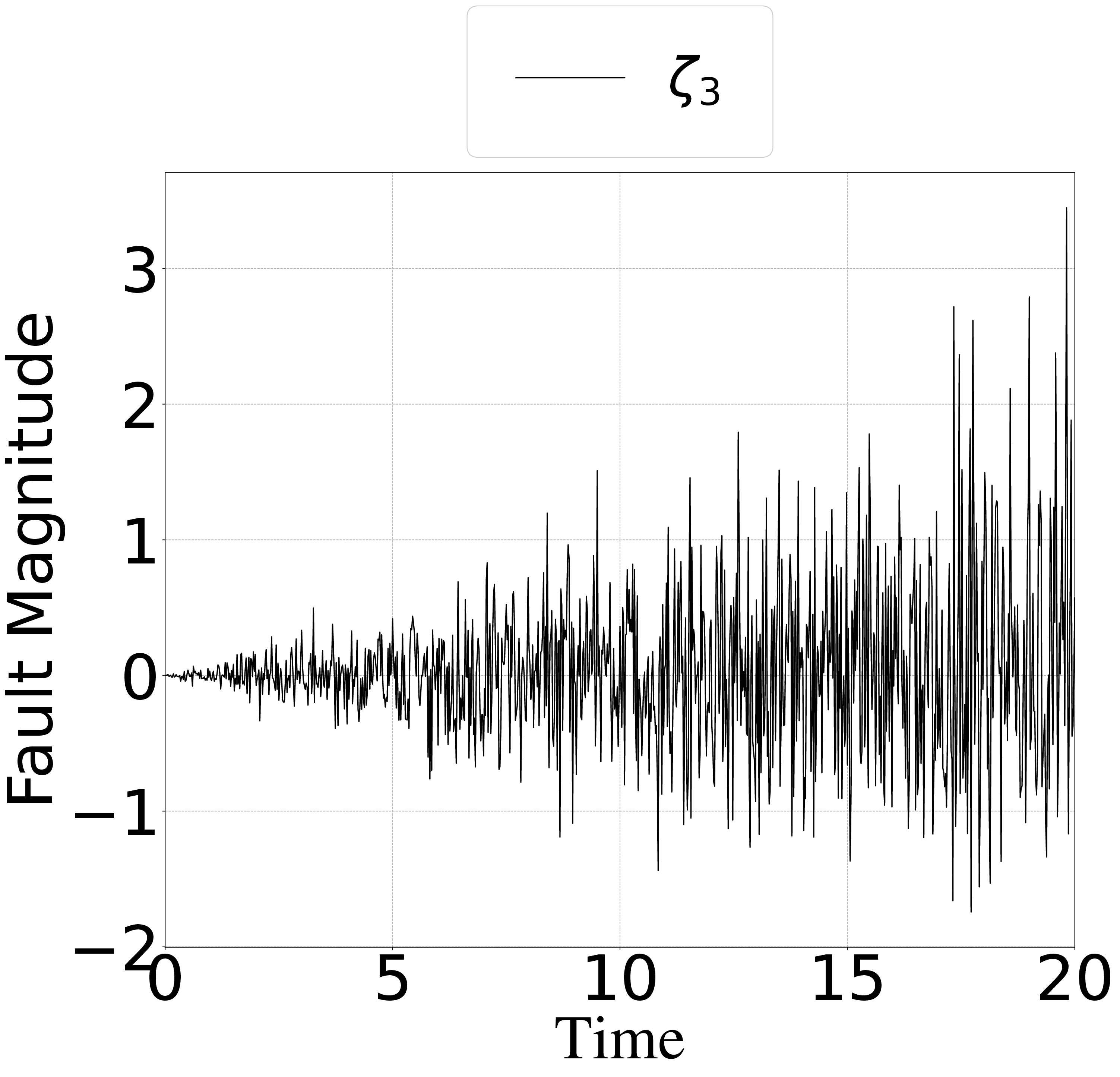}
        \includegraphics[width=\textwidth]{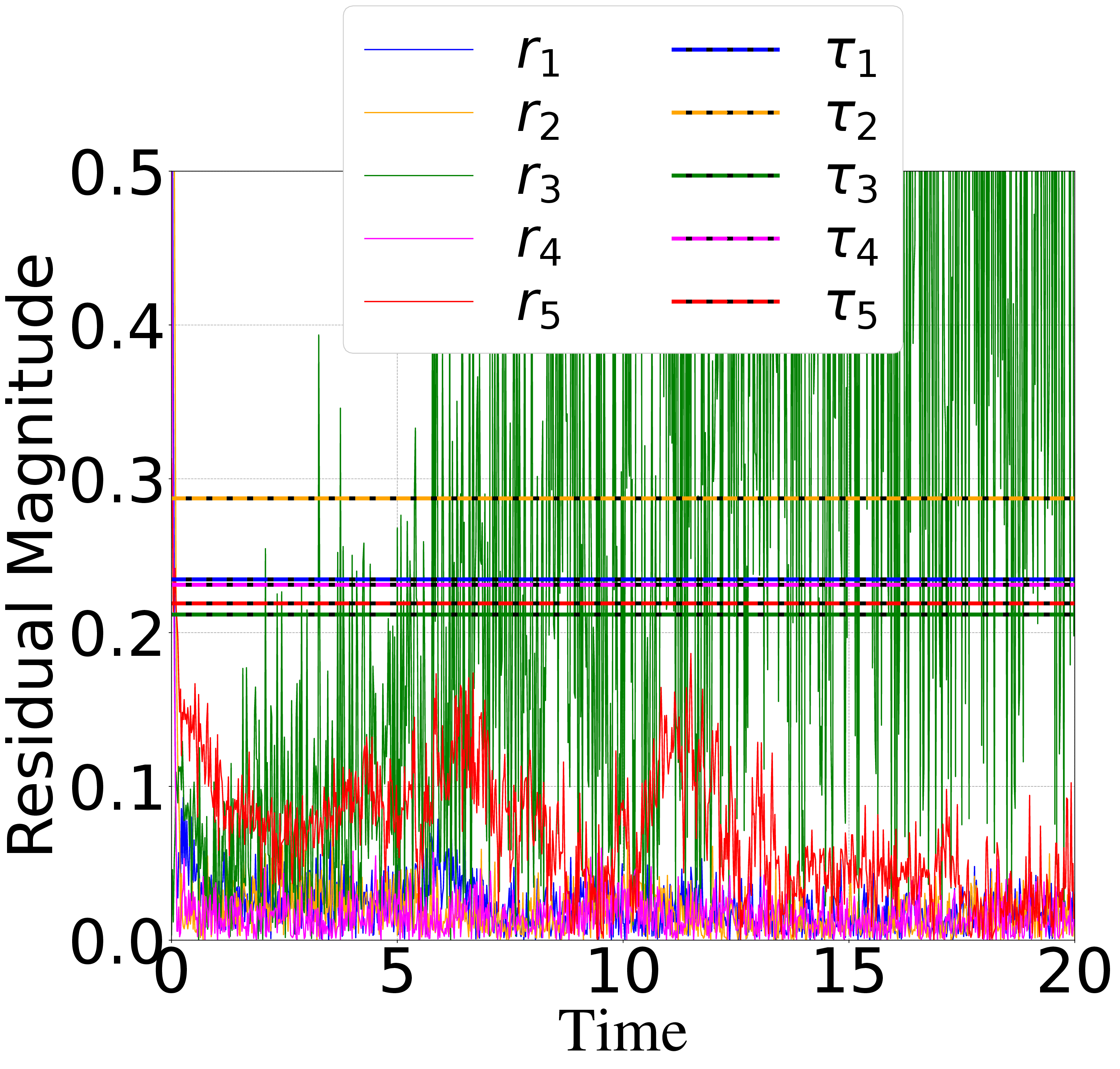}
        \includegraphics[width=\textwidth]{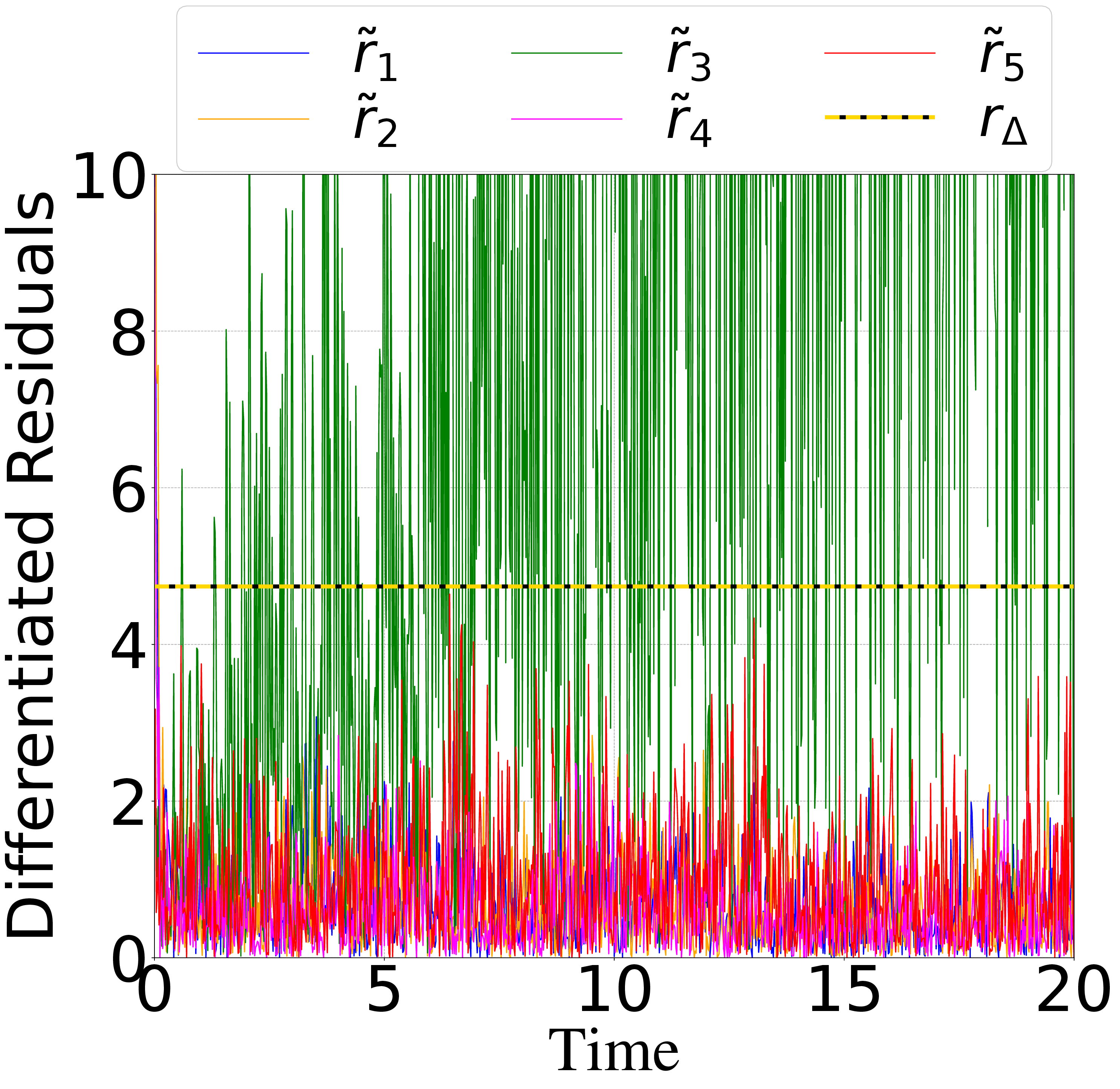}
        \caption{}
        \label{fig:d}
        \end{subfigure}
        \begin{subfigure}[t]{0.195\textwidth}
        \includegraphics[width=\textwidth]{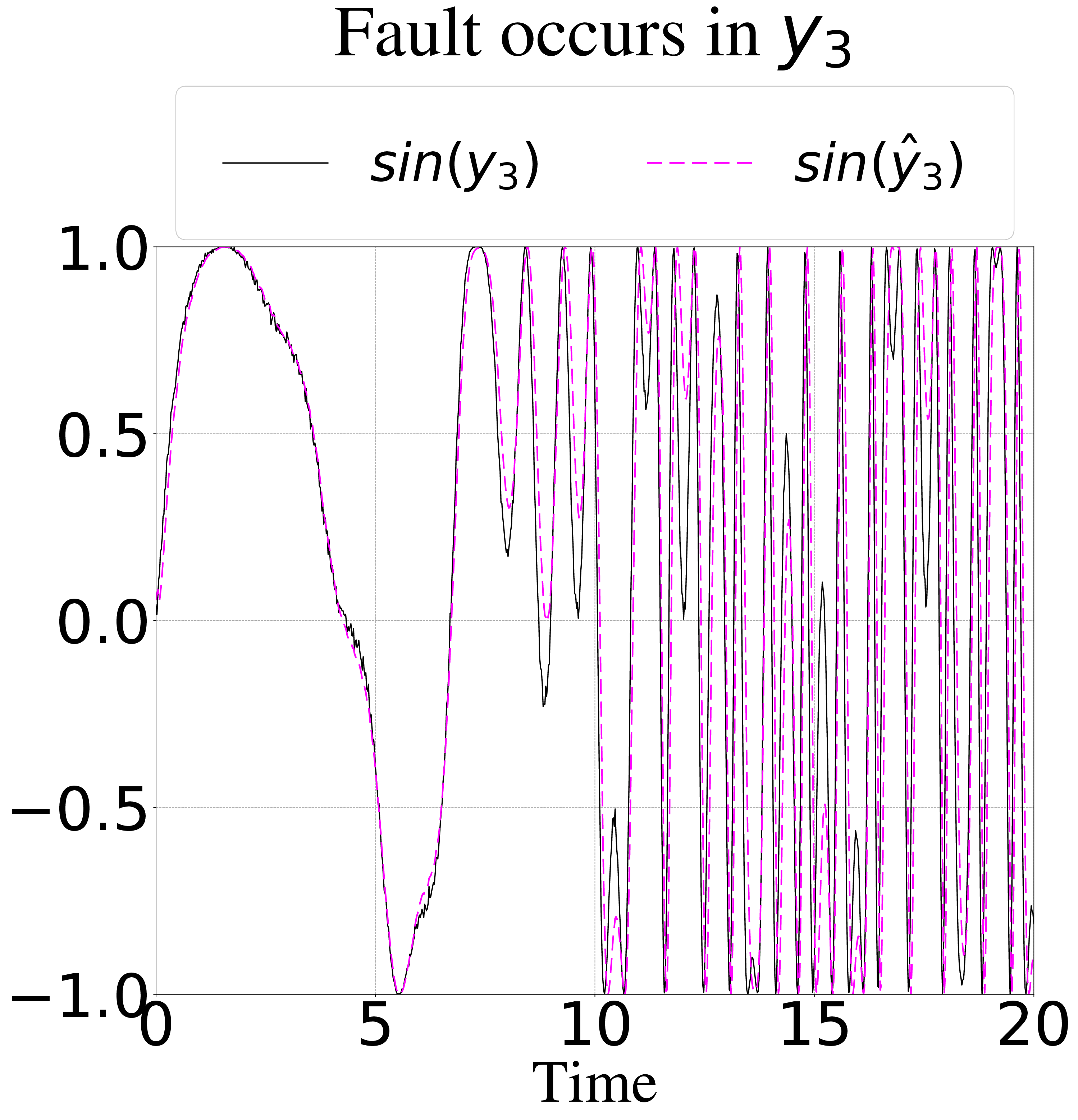}
        \includegraphics[width=\textwidth]{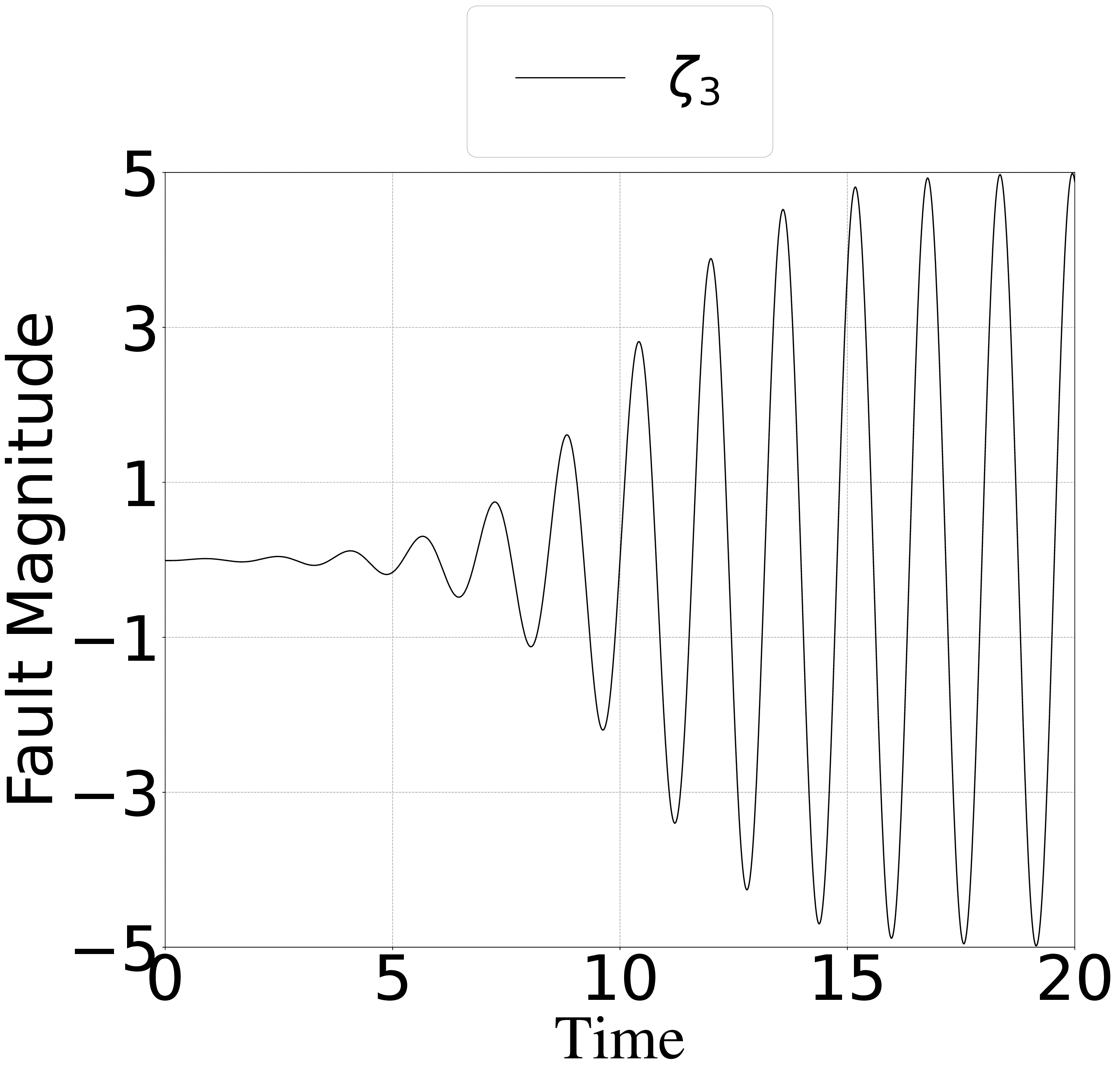}
        \includegraphics[width=\textwidth]{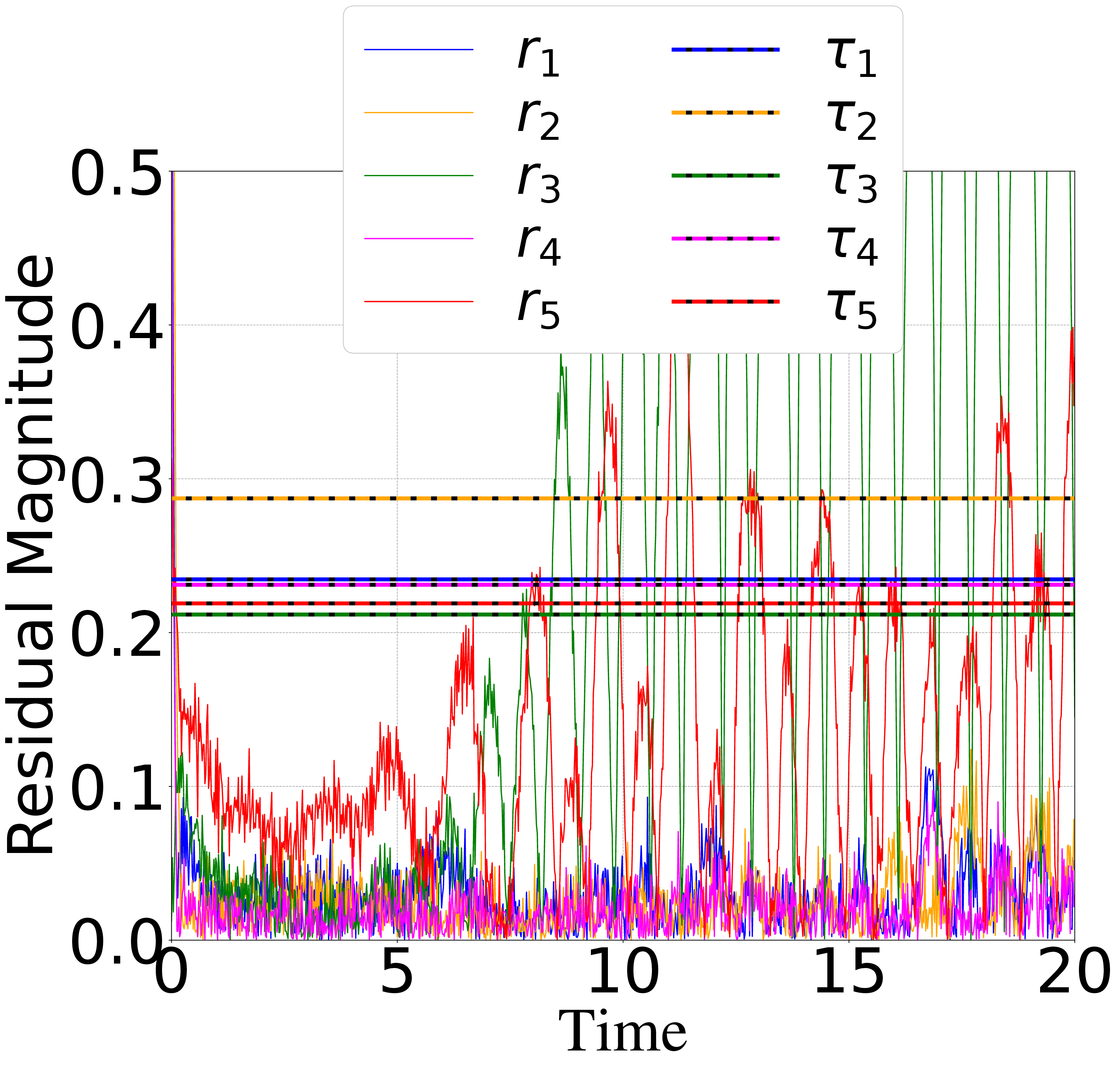}
        \includegraphics[width=\textwidth]{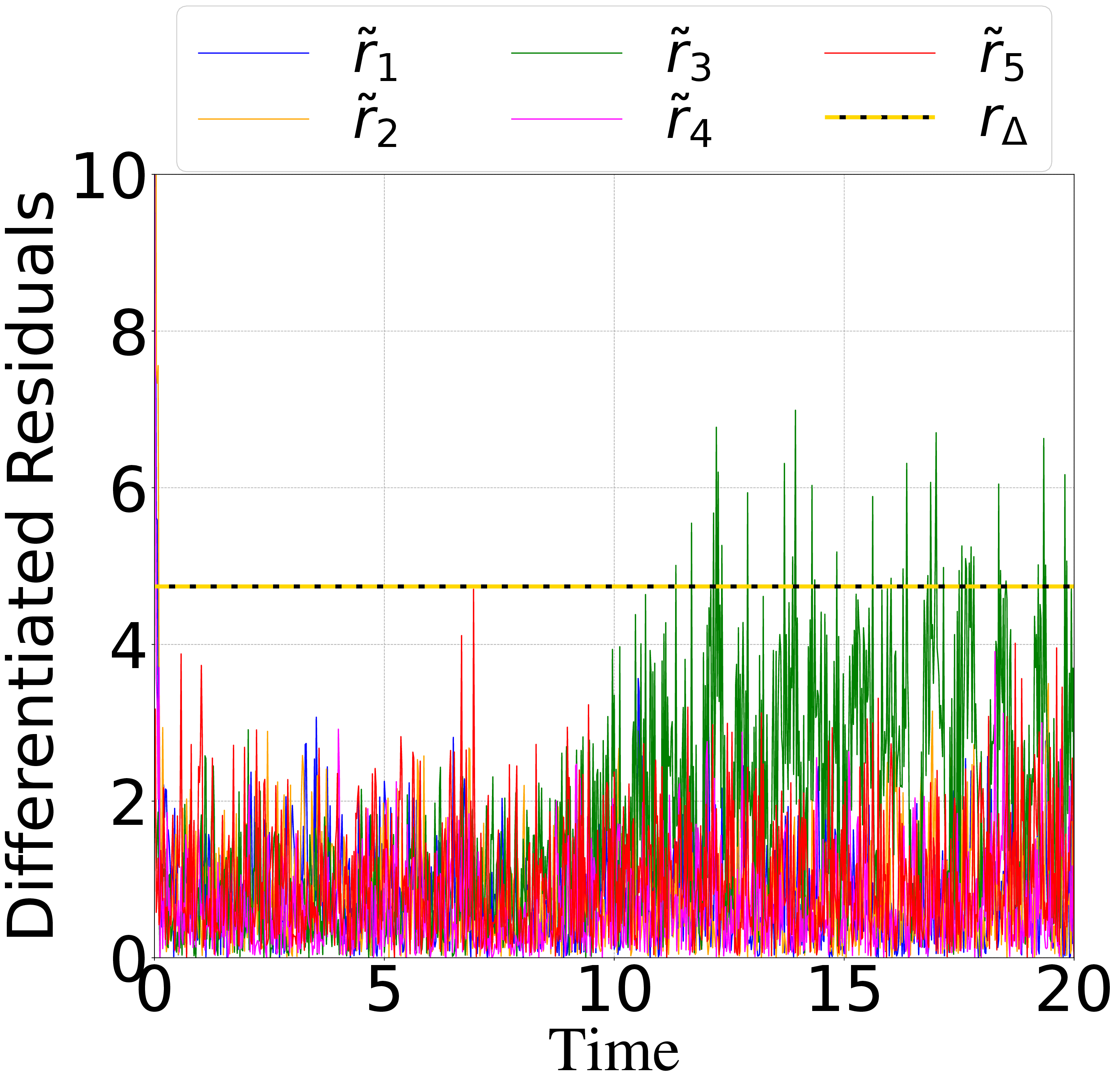}
        \caption{}
        \label{fig:e}
        \end{subfigure}
                
        \caption{(a) Complete failure of sensor 4. (b) Degradation of sensor 1 and 5 at different time points with abrupt faults. (c) Smooth sigmoidal fault in sensor 2. (d) Gradually increasing white noise in sensor 3. (e) Gradually increasing sinusoidal signal in sensor 3.}
        \label{fig:single_sensor}
\end{figure*}

\subsection{Kuramoto Model}

We consider the Kuramoto model for demonstrating our \mbox{s-FDI} method. The Kuramoto model describes the phenomena of synchronization in a multitude of systems, including electric power networks, multi-agent coordination, and distributed software networks \cite{dorfler2014}. The dynamics of a network with $n$ nodes are given as
\begin{equation}\label{kuramoto}
    \dot{\theta_{i}}(t) = \omega_{i} + \sum_{j=1}^{n} a_{ij} \sin(\theta_{i}(t) - \theta_{j}(t))
\end{equation}
where $\theta_{i}(t)\in\mathbb{S}^1$ is the phase angle of node $i = 1,..,n$, $\omega_{i}\in\mathbb{R}$ is the natural frequency of node $i$, and $a_{ij}\geq 0$ denotes the coupling between node $i$ and $j$. In the literature, the state trajectories of \eqref{kuramoto} are often represented graphically as $\sin(\theta_{i})$ in order to better illustrate their synchronization. We follow the same convention in our simulations. 

\subsection{Experimental Setup}

For \eqref{kuramoto}, we consider a network of 10 nodes with randomly generated natural frequencies $\omega_{i}$ and couplings $a_{ij}$. The measurements are chosen as $y= \begin{bmatrix} \theta_{1} & \theta_{2} & \theta_{3} & \theta_{4} & \theta_{5} \end{bmatrix}^\T$. A set of 50 initial conditions is generated using Latin hypercube sampling, with $\mc{X}_{\train} \subset [-2,2]^{10}$. We choose Runge-Kutta-4 as our numerical ODE solver to simulate \eqref{kuramoto} and \eqref{observer_state} over a time interval of $[0,30]$, partitioned into 4000 sample points for each trajectory. The neural network $\hat{\mathcal{T}}^*_{\eta}$ is chosen as a fully connected feed-forward network, consisting of 3 hidden layers of 250 neurons with ReLU activation function. Model training is facilitated by data standardization and learning rate scheduling. Following \cite{bernard2022}, the matrices of \eqref{observer_state} are chosen as
\begin{equation*}
    A = \Lambda \otimes I_{n_{y}}, \qquad B = \Gamma \otimes I_{n_{y}}
\end{equation*}
where $\Lambda \in \mathbb{R}^{(2n_{x}+1) \times (2n_{x}+1)}$ is a diagonal matrix with diagonal elements linearly distributed in $[-15,-21]$, $\Gamma \in \mathbb{R}^{2n_{x}+1}$ is a column vector of ones, and $I_{n_{y}}$ is the identity matrix of size $n_{y} \times n_{y}$. Here, $n_x$ and $n_{y}$ are 10 and 5, respectively, and $n_z = n_y(2n_x+1)=105$.

The estimation capabilities of the observer under fault-free conditions are demonstrated in Fig.~\ref{fig:demo}, which shows the estimated and true trajectories of two (randomly chosen) unmeasured states $\theta_{7}$ and $\theta_{8}$ over a time interval of $[0,20]$, with noise terms $w(t),v(t) \sim \mathcal{N}(0,0.02)$. The figure demonstrates that the estimation error is stable under noise and neural network approximation error.


\subsection{Numerical Results}

We now apply the learned neural network-based KKL observer to perform s-FDI. The theoretical thresholds $\tau_i$ of the residuals $r_i(t)$ computed by \eqref{eq:taui} are shown in the third row of Fig.~\ref{fig:single_sensor}. On the other hand, the empirical threshold $r_\Delta=4.74$ is computed using \eqref{eq:empirical_threshold} according to the method described in Section~\ref{subsec_isolation}. We choose $N = 100$ initial conditions to create $\mathcal{X}_\test \in [-2,2]^{10}$, again using Latin hypercube sampling to compute $r_\Delta$ using \eqref{eq:empirical_threshold}. Fig.~\ref{fig:a}--\ref{fig:e} demonstrate the detection and isolation capabilities of our method under a variety of faults. In the figure, the first, second, third and fourth rows correspond to the measured and estimated state trajectories, the fault signal, the residuals $r_{i} = |y_{i}-\hat{y}_{i}|$ and the finite difference approximation \eqref{eq:inc_residual-i} respectively. 

In Fig.~\ref{fig:c}, we show that our method is capable of detecting sensor shutdowns due to complete failure, which we demonstrate by modeling the fault in sensor~${4}$ with $\phi_{4}(t) = 0$. Fig.~\ref{fig:b} illustrates the situation when more than one fault is present. Sensors~${1}$ and ${5}$ are disturbed by $\zeta_{1}(t)=1$, at $t\geq 5$, and $\zeta_{5}(t) = 1$, at $t \geq 15$. Each fault is distinctly detectable at the moment of occurrence.

In Fig.~\ref{fig:a}, sensor~${2}$ is disturbed by a smooth sigmoidal fault term $\zeta_{2}(t)$ introduced at $t = 5$. Because of the smooth sigmoidal signal as fault, the observer does not detect any anamoly and continues to follow the measured state trajectories, thus generating a small residual. Although introducing an abrupt fault as in Fig.~\ref{fig:c} and \ref{fig:b} induces a transient in the observer, causing a large residual to be generated, introducing a smooth fault in Fig.~\ref{fig:a} resulted in only detection but not isolation. Due to the stability of the observer, it attempts to track the faulty trajectories after the occurrence of the fault, leading to a decrease in the residual magnitudes subsequently.

Fig.~\ref{fig:d} and \ref{fig:e} illustrates the case when the fault signal on sensor~$3$ is introduced gradually. In Fig.~\ref{fig:d}, we simulate a fault in sensor $y_{3}$ which is a gradually increasing white noise. In Fig.~\ref{fig:e}, the fault is a sinusoidal signal that gradually increases its magnitude from 0 to 5. Here, due to a gradual increase in the fault signals' magnitudes, it can be observed that both detection and isolation are successfully performed but with a small delay.


\section{Conclusion and Future Work} \label{conclusion}

We have introduced a novel sensor fault detection and isolation method tailored for a general class of nonlinear systems. For s-FDI, we leverage a neural network-based Kazantzis-Kravaris/Luenberger (KKL) observer for residual generation. We derived theoretical upper bounds for the residuals that are obtained by comparing measured outputs with those predicted by the observer. These upper bounds serve as the foundation for analytically computing thresholds. When a residual crosses its corresponding threshold, it indicates the detection of sensor faults. However, when it comes to the critical task of fault isolation, specifically pinpointing the malfunctioning sensor, we observed the importance of examining numerically differentiated residuals rather than the usual residuals. To this end, we introduced an empirical methodology, involving experimentation with the learned KKL observer, to calculate the threshold for the differentiated residuals.

We demonstrated the efficacy of our approach on a network of Kuramoto oscillators by evaluating various fault scenarios, including sensor degradation and complete failure. The comprehensive set of simulations provides compelling evidence of the method's robustness and effectiveness in both fault detection and isolation within the context of autonomous nonlinear systems.

The theory of KKL observers extends to non-autonomous systems, and adapting our method to those systems remains an open research topic. It is also of interest to study the performance of the method in real applications, especially in systems where conventional solutions are known to fail.


\appendix

\subsection{Proof of Proposition~\ref{prop:res_contraction}}
\label{appendix:proof_prop1}
    In a fault-less case, $\phi(t)=1$ and $\zeta(t)=0$. Thus, from \eqref{eq:residual-i},
    \begin{align}
        |r_i| & = |h_i(x)-h_i(\hat{x}) + v_i| \notag \\
        & \leq \ell_{h_i} \|x-\hat{x}\| + |v_i|. \label{eq:prop_proof1}
    \end{align}
    We have
    \begin{align}
        \|x-\hat{x}\| & = \|\mc{T}^*(z) - \hat{\mc{T}}_\eta^*(\hat{z})\| \notag \\
        & = \|\mc{T}^*(z) - \hat{\mc{T}}_\eta^*(z) + \hat{\mc{T}}_\eta^*(z) - \hat{\mc{T}}_\eta^*(\hat{z})\| \notag \\ 
        & \leq \|\mc{T}^*(z) - \hat{\mc{T}}_\eta^*(z)\| + \|\hat{\mc{T}}_\eta^*(z) - \hat{\mc{T}}_\eta^*(\hat{z})\| \notag \\ 
        & \leq \xi^*(z) + \ell_\eta \|z-\hat{z}\|. \label{eq:prop_proof2}
    \end{align}
    Similarly, we have
    \begin{align}
        \|z-\hat{z}\| & = \|\mc{T}(x) - \hat{\mc{T}}_\theta(\hat{x}) \pm \hat{\mc{T}}_\theta(\hat{x}) \| \notag \\ 
        & \leq \|\mc{T}(x)-\hat{\mc{T}}_\theta(x)\| + \|\hat{\mc{T}}_\theta(x) - \hat{\mc{T}}_\theta(\hat{x})\| \notag \\
        & \leq \xi(x) + \ell_\theta \|x-\hat{x}\|. \label{eq:prop_proof3}
    \end{align}
    By substituting \eqref{eq:prop_proof3} into \eqref{eq:prop_proof2} and rearranging, we obtain
    \[
        \|x-\hat{x}\| \leq \frac{\xi^*(z) + \ell_\eta\xi(x)}{1-\ell_\theta\ell_\eta}.
    \]
    Substituting this into \eqref{eq:prop_proof1} and using Assumption~\ref{assume-1}(i) proves the result.

\subsection{Proof of Lemma~\ref{lemma:zeta_bound}}
\label{appendix:proof_lemma2}

Define $\sigma(t)\doteq h(\Bar{x}(t))-h(x(t))-v(t)$. Then, from the dynamics $\dot{\Tilde{z}}(t)$, we have that the solution $\Tilde{z}(t;\Tilde{z}_0,\sigma)$ satisfies
\be
    \label{eq:ineq_ztilde}
    \|\Tilde{z}(t)\| \leq \|\exp(At)\Tilde{z}_0\| + \int_0^t \|\exp(A\tau)B\sigma(t-\tau)\| \text{d}\tau.
\ee
\begin{lemma} \label{lemma:exp_ineq}
    Suppose the matrix $A$ is Hurwitz and diagonalizable with eigenvalue decomposition $A=V\Lambda V^{-1}$. Then,
    \[
        \|\exp(At)\| \leq \kappa(V) e^{-ct}
    \]
    and
    \[
        \int_0^t \|\exp(A\tau) B \| \text{d}\tau \leq \frac{\kappa(V)}{c} \|B\| (1-e^{-ct})
    \]
    where $\kappa(V)=\|V\|\|V^{-1}\|$ is the condition number of $V$ and
    \be \label{eq:min_evalue}
        c = \min_{\lambda\in\eig(A)} |\text{Re}(\lambda)|.
    \ee
\end{lemma}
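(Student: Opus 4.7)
The plan is to use the diagonalization of $A$ to compute the matrix exponential explicitly, then apply submultiplicativity of the induced operator norm together with the Hurwitz stability assumption to obtain both bounds.

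First I would write $\exp(At) = V \exp(\Lambda t) V^{-1}$, which follows from the power series definition of the matrix exponential and the fact that $A^k = V \Lambda^k V^{-1}$. Since $\Lambda = \diag(\lambda_1,\dots,\lambda_{n_z})$ is diagonal, $\exp(\Lambda t)$ is diagonal with entries $e^{\lambda_i t}$, and its operator (spectral) norm equals $\max_i |e^{\lambda_i t}| = \max_i e^{\mathrm{Re}(\lambda_i) t}$. Because $A$ is Hurwitz, all $\mathrm{Re}(\lambda_i) < 0$, so
\[
    \|\exp(\Lambda t)\| = \max_i e^{\mathrm{Re}(\lambda_i) t} = e^{-ct},
\]
with $c$ defined in \eqref{eq:min_evalue}. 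Applying submultiplicativity yields
\[
    \|\exp(At)\| \leq \|V\|\,\|\exp(\Lambda t)\|\,\|V^{-1}\| = \kappa(V) e^{-ct},
\]
which is the first inequality.

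For the second inequality, I would again use submultiplicativity to get $\|\exp(A\tau) B\| \leq \|\exp(A\tau)\| \|B\| \leq \kappa(V) \|B\| e^{-c\tau}$, and then integrate termwise:
\[
    \int_0^t \|\exp(A\tau) B\|\,\mathrm{d}\tau \leq \kappa(V)\|B\| \int_0^t e^{-c\tau}\,\mathrm{d}\tau = \frac{\kappa(V)}{c} \|B\| (1-e^{-ct}).
\]

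I do not anticipate any real obstacle here, as the argument is entirely standard linear algebra. The only subtle point is verifying that the induced $2$-norm of a diagonal matrix equals the maximum absolute value of its diagonal entries (a direct consequence of its singular values coinciding with $|e^{\lambda_i t}|$), which is where the Hurwitz assumption enters to guarantee that this maximum is $e^{-ct}$ for the prescribed $c>0$. Everything else is a straightforward application of the submultiplicative property of the operator norm and a one-line integral.
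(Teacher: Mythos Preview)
Your argument is correct and is precisely the standard diagonalization-plus-submultiplicativity computation that the paper has in mind; in fact the paper does not spell out a proof at all, simply noting that the lemma is straightforward and pointing to the Jordan-form variant in Sontag for the non-diagonalizable case. There is nothing to add or fix.
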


The proof of this lemma is straightforward. A similar result when $A$ is not diagonalizable can be obtained via the Jordan form of $A$; see \cite[Appendix C.5]{sontag2013}. However, since diagonalizable matrices are dense in the space of square matrices, the diagonalizability assumption is very mild.

The second term on the right-hand side of \eqref{eq:ineq_ztilde} satisfies
\begin{align}
& \int_0^t \|\exp(A\tau)B\sigma(t-\tau)\| \text{d}\tau \hspace{2cm} \nonumber \\ 
& \hspace{2cm} \leq \int_0^t \|\exp(A\tau)B\| \|\sigma(t-\tau)\| \text{d}\tau \nonumber \\
& \hspace{2cm} \leq \int_0^t \|\exp(A\tau)B\| \text{d}\tau \, \max_{\tau\geq 0} \|\sigma(\tau)\|
\label{eq:ineq_2}
\end{align}
From the definition of $\sigma(t)$, we have
\[\arraycolsep=2pt
\ba{ccl}
\|\sigma(t)\| & \leq & \|h(\Bar{x}(t))-h(x(t))\| + \|v(t)\| \\
& \leq & \ell_h \|\Bar{x}(t)-x(t)\| + \|v(t)\| \\
& \leq & \ell_h \psi(\|w_{[0,t]}\|_{L^\infty}) + \sqrt{n_y} \|v(t)\|_\infty.
\ea\]
The first step is due to the triangle inequality, the second step is due to the smoothness of $h(\cdot)$, and the last step is due to Assumption~\ref{assume-1}(ii) and properties of vector norms. Therefore, 
\be
    \label{eq:ineq_max}
    \max_{\tau\geq 0} \|\sigma(\tau)\| \leq \ell_h \psi(\Bar{w}) + \sqrt{n_y} \Bar{v}.
\ee
The result then follows by substituting \eqref{eq:ineq_2} and \eqref{eq:ineq_max} into \eqref{eq:ineq_ztilde}, and then using the Lemma~\ref{lemma:exp_ineq}.

\bibliographystyle{IEEEtran}
\bibliography{bib}

\end{document}